\newtheorem{theorem}{Theorem}[section]
\newtheorem{lemma}[theorem]{Lemma}
\newtheorem{corollary}[theorem]{Corollary}
\newtheorem{proposition}[theorem]{Proposition}
\newtheorem{example}[theorem]{Example}
\newtheorem{question}[theorem]{Question}
\theoremstyle{definition}
\newtheorem{definition}[theorem]{Definition}
\newtheorem*{assumption*}{Standing Assumption}
\theoremstyle{remark}
\newtheorem{remark}[theorem]{Remark}
\providecommand{\scr}{\mathcal} 
\def\relay#1#2{%
  \expandafter\def\csname #1\endcsname{#2}%
}
\def\makecal#1{%
	\relay{c#1}{\ensuremath{\mathcal{#1}}}%
}
\newcommand{\makebb}[1]{\relay{bb#1}{\ensuremath{\mathbb{#1}}}}
\forcsvlist{\makecal}{X,Y,K,N,R,F,Q,P,U,O,M,E}
\forcsvlist{\makebb}{R,N,C,Q,D,Z,F,T}
\newcommand{\makemathop}[1]{\expandafter\DeclareMathOperator\expandafter{\csname #1\endcsname}{#1}}
\forcsvlist{\makemathop}{id, coker, ev, End}
\def\newspan{\operatorname{span}}
\newcommand{\N}{\mathbb{N}}
\newcommand{\Z}{\mathbb{Z}}
\newcommand{\TT}{\mathcal{T}}
\newcommand{\OO}{\mathcal{O}}
\numberwithin{equation}{section}
\title[Leavitt $R$-algebras over countable graphs embed into $L_{2,R}$]{Leavitt $R$-algebras over countable graphs \\ embed into $L_{2,R}$}
\author{Nathan Brownlowe}
\address[Nathan Brownlowe]{School of Mathematics and Applied Statistics, University of 
Wollongong, Australia}
\email{nathanb@uow.edu.au}
\author{Adam P W S{\o}rensen}
\address[Adam P W S{\o}rensen]{Department of Mathematics, University of Oslo, Norway, and School of Mathematics and Applied Statistics, University of Wollongong, Australia}
\email{apws@math.uio.no}
\keywords{Leavitt path algebra, graph $C^*$-algebra}
\subjclass[2010]{16B99, 46L05, 46L55}
\date{}
\begin{document}

\begin{abstract}
For a commutative ring $R$ with unit we show that the Leavitt path algebra $L_R(E)$ of a graph $E$ embeds into $L_{2,R}$ precisely when $E$ is countable. Before proving this result we prove a generalised Cuntz-Krieger Uniqueness Theorem for Leavitt path algebras over $R$.
\end{abstract}

\maketitle

\section{Introduction}

In \cite{LeavittOriginal} Leavitt introduced a class of rings $R$, which fail to possess the Invariant Basis Number Property in a strong sense. 
A ring $R$ does not have invariant basis number if there exist $m,n\in\N$, $m\not=n$, with $R^m\cong 
R^n$ as modules; such a ring is said to be of module type $(m,n)$ if $m$ is the smallest natural number such that $R^m \cong R^k$ for some $k > m$ and if $n>m$ is the minimal natural number with $R^m \cong R^n$. 
For $K$ a field and $m,n$ any pair of natural numbers with $1 \le m<n$, Leavitt introduced what is now called the Leavitt algebra $L_K(m,n)$, which is an algebra over $K$ of module type $(m,n)$.
Furthermore the Leavitt algebras are universal for their module type. 

In \cite{CuntzOn} Cuntz introduced a class of $C^*$-algebras now referred to as Cuntz algebras. 
While it was not realised at the time the Cuntz algebras $\cO_n$ are natural $C^*$-algebraic analogues of the Leavitt algebras $L_K(1,n)$ --- they are universal for the same defining relations (in each their category) and $L_{\bbC}(1,n)$ is dense in $\cO_n$. 
Partly for this reason we denote $L_K(1,n)$ by $L_{n,K}$ from here on out. 
Cuntz algebras have by now been generalised in many different ways, including by the Cuntz-Krieger algebras of \cite{CuntzKrieger} and the more general $C^*$-algebras associated to directed graphs \cite{EnomotoWatatani,KumjianPaskRaeburnRenault}.
Graph $C^*$-algebras have become their own industry in $C^*$-algebra theory, and have recently been used in the ongoing classification of $C^*$-algebras program \cite{EilersRestorffRuizSubquotients,RuizSimsSorensonUCTKDimOne}.
Inspired by the success of graph $C^*$-algebras, Abrams and Aranda Pino \cite{AbramsPinoOriginalLeavitt} and, independently, Ara, Moreno and Pardo \cite{AraMorenoPardoKTheoryGraphAlgberas} generalised Leavitt algebras by introducing Leavitt path algebras associated 
to directed graphs. 
Since the appearance of \cite{AbramsPinoOriginalLeavitt,AraMorenoPardoKTheoryGraphAlgberas} Leavitt path algebras have enjoyed a similar amount of attention as their $C^*$-algebraic cousins, including Tomforde's work on Leavitt path algebras with coefficients in arbitrary (unital) commutative rings \cite{TomfordeLeavittOverRing}.
For an account of the history of Leavitt path algebras see the excellent survey paper \cite{AbramsTheFirstDecade}.

One of the main trends in Leavitt path algebras has been a desire to get a purely algebraic version of the celebrated classification theorem of Kirchberg and Phillips.
The Kirchberg-Phillips Theorem (see \cite{KirchbergClassification, PhillipsClassification} and \cite{KirchbergPhillipsEmbedding}) shows that a class of purely infinite $C^*$-algebras are completely classified by their $K$-theory.  
In a precursor to his proof of this classification theorem, Kirchberg proved three seminal theorems, which have been labelled ``Kirchberg's Geneva Theorems'' (see MR1780426):
\begin{enumerate}
	\item A separable $C^*$-algebra is exact if and only if it is a 
	sub-$C^*$-algebra of the Cuntz algebra $\cO_2$;
	\item $A \otimes \cO_2$ is isomorphic to $\cO_2$ if and only if $A$ is a 
	unital, simple, separable and nuclear $C^*$-algebra; and 
	\item if $A$ is a separable, simple, nuclear $C^*$-algebra, then $A$ is isomorphic 
	to $A \otimes \cO_\infty$ if and only if $A$ is purely infinite.
\end{enumerate}

While there are many similarities between the theories of graph 
$C^*$-algebras and Leavitt path algebras, the algebraic analogues of the Geneva Theorems have provided some of the most striking {\em differences} between them so far. 
Ara and Corti{\~n}as studied the Hochschild homology of tensor products of Leavitt path algebras in \cite{AraCortinasTensorProducts}, and concluded that the natural algebraic version of Geneva Theorems (2) and (3) do not hold. 
In particular, Ara and Corti{\~n}as show that for $K$ any field $L_{2,K} \otimes L_{2,K}$ is not\footnote{This is in contrast to $\cO_2\otimes\cO_2\cong\cO_2$, which was used in the proof of Theorems (1) and (2), but was published by R{\o}rdam in \cite{RordamO2} (and proved earlier by Elliott).} Morita equivalent to $L_{2,K}$, thus providing a counterexample to (2). 
In fact they show that $L_{2,K} \otimes L_{2,K}$ cannot be isomorphic to any Leavitt path algebra. 
They also show that $L_{\infty,K}\otimes L_K(E)$ is not Morita equivalent to $L_{K}(E)$ for a class of graphs $E$, thus providing a counterexample to (3). 
The result $L_{2,K}\otimes L_{2,K}\not\cong L_{2,K}$ was also proved independently by Dicks and by Bell and Bergman. 
Both proofs are unpublished but are described by Abrams in \cite[Section~3.5]{AbramsTheFirstDecade}.

In this paper we commence an investigation into a possible algebraic analogue of Geneva Theorem (1).
Rather than attempt a complete generalization, we take the more modest approach of investigate whether all Leavitt path algebras over $R$ embed into $L_{2,R}$.
Our main result is that the Leavitt path algebra $L_R(E)$ of any countable graph $E$ does indeed embed into $L_{2,R}$.
It is not hard to see that countability is a necessary assumption: a result of \cite{AbramsRangaswamy} implies that 
$L_{2,R}$ has at most countable dimension and Leavitt path algebras of uncountable graphs contain uncountable linearly independent subsets, which means they cannot possibly embed into $L_{2,R}$.

To establish the injectivity of the embeddings $L_R(E)\hookrightarrow L_{2,R}$ we have proved a generalised Cuntz-Krieger Uniqueness Theorem for arbitrary graphs, in the spirit of Szymanski's Unqueness Theorem for graph $C^*$-algebras \cite[Theorem~1.2]{SzymanskiUniqueness}. 
Our Uniqueness Theorem generalises the Cuntz-Krieger Uniqueness Theorem proved by Tomforde \cite[Theorem~6.5]{TomfordeLeavittOverRing} for graphs in which every cycle has an exit (Condition (L)).

\section{Background}\label{sec: background}

%
%

A {\em directed graph} $E=(E^0,E^1,r,s)$ consists of sets $E^0$ and 
$E^1$, and maps $r,s \colon E^1\to E^0$. We call a graph $E$ {\em countable} if $E^0$ 
and $E^1$ are countable, and {\em uncountable} otherwise. We call elements of 
$E^0$ vertices, and 
elements of $E^1$ edges. The maps $r$ and $s$ are called the range and source 
maps, respectively. A vertex $v\in E^0$ is called a {\em sink} if 
$s^{-1}(v)=\varnothing$, and is called an {\em infinite emitter} if 
$|s^{-1}(v)|=\infty$. 

A {\em path} in a graph $E$ is a sequence of edges $\alpha=e_1e_2\dots e_n$ 
with $r(e_i)=s(e_{i+1})$ for all $1\le i\le n-1$. A path of 
$n$ edges is said to have {\em length} $n$. The set of paths of length $n$ is 
denoted $E^n$, and the set of all finite paths is denoted 
$E^*=\cup_{n=0}^\infty E^n$. The range and source maps extend 
to finite paths in the obvious way. A {\em cycle} $\alpha=e_1\dots e_n$ is a 
path of length at least one which satisfies $s(\alpha)=r(\alpha)$. Cycles are also called closed paths in the literature. A cycle is called {\em vertex simple} if no vertices are repeated. An {\em 
exit} for a path $\alpha=e_1\dots e_n$ is an edge $e$ such that $s(e)=s(e_i)$ 
and $e\not= e_i$ for some $1\le i\le n$. A graph $E$ satisfies Condition (L) 
if every cycle has an exit. 

An {\em infinite path} in a graph $E$ is an infinite sequence of edges 
$e_1e_2\dots$ with $r(e_i)=s(e_{i+1})$ for all $i\ge 1$. We denote the set of 
infinite paths by $E^\infty$.

\begin{definition}\label{def: ghost edges}
Given a graph $E$ we define $(E^1)^*$ to be the set of formal symbols 
$\{e^*:e\in E^1\}$. In the literature, elements $e^*$ are called ghost edges, 
and elements of $E^1$ are called real edges. We also require the set of formal 
symbols $\{v^*:v\in E^0\}$, but we insist that $v^*=v$. For $\alpha=e_1\dots 
e_n$ we define $\alpha^*=e_n^*\dots e_1^*$. 
\end{definition}  

The following definition first appeared in \cite{AbramsPinoArbritraryGraphs} 
(and appeared earlier in 
\cite{AbramsPinoOriginalLeavitt} in a less general setting).

\begin{definition}\label{def: Leavitt path algebra over field}
Let $E$ be a directed graph and $K$ a field. The {\em Leavitt path algebra of 
$E$ with coefficients in $K$}, denoted 
$L_K(E)$, is the universal $K$-algebra generated by a family $\{v,e,e^*:v\in 
E^0,e\in E^1\}$ satisfying
\begin{enumerate}
\item[(1)] $\{v:v\in E^0\}$ consists of pairwise orthogonal idempotents;
\item[(2)] $s(e)e=er(e)=e$ for all $e\in E^1$;
\item[(3)] $r(e)e^*=e^*s(e)=e^*$ for all $e\in E^1$;
\item[(4)] $e^*f=\delta_{e,f}r(e)$ for all $e,f\in E^1$; and
\item[(5)] $v=\sum_{s(f)=v}ff^*$ for all $v\in E^0$ with 
$0<|s^{-1}(v)|<\infty$.  
\end{enumerate}
If $S$ is a ring and $\{v,e,e^*:v\in E^0,\, e\in E^1\}\subseteq S$ is a 
collection satisfying (1)--(5) above, we call this collection a {\em Leavitt 
$E$-family} in $S$.
\end{definition}

In \cite[Definition~3.1]{TomfordeLeavittOverRing} the definition of a Leavitt 
path algebra was 
generalised to include more general coefficients. For $E$ a directed graph and 
$R$ a commutative ring with unit, the {\em Leavitt path algebra with 
coefficients in $R$}, denoted $L_R(E)$, is the universal $R$-algebra generated 
by a Leavitt $E$-family. Recall from 
\cite[Proposition~3.4]{TomfordeLeavittOverRing} that the 
elements $\{v,e,e^*:v\in E^0,\, e\in E^1\}$ of the Leavitt $E$-family 
generating $L_R(E)$ are all nonzero. Moreover, that
\[
L_R(E) = \newspan_R\{\alpha\beta^*: \alpha,\beta\in E^*\text{ with } 
r(\alpha)=r(\beta)\},
\]
and $rv\not=0$ for all $v\in E^0$ and $0\not=r\in R$. Also recall that by 
\cite[Proposition 4.9]{TomfordeLeavittOverRing} the set $E^*$ and the set of 
all ghost paths are both linearly independent sets. 

The map $\alpha\beta^*\mapsto \beta\alpha^*$ extends to an $R$-linear 
involution of $L_R(E)$. 

\begin{assumption*}
Throughout this paper $R$ will always be a commutative ring with unit.
\end{assumption*}

We will be focussed on the graph
\vspace{-0.3cm}

\begin{equation}\label{eq: 2 loop graph}
\begin{tikzpicture}[baseline=(current  bounding  box.center)]

\node[circle, draw=black,fill=black, inner sep=1pt] (u) at (0,0) {};
    
\draw[-latex,thick] (u) .. controls +(225:1.5cm) and +(135:1.5cm) .. (u);
\draw[-latex,thick] (u) .. controls +(315:1.5cm) and +(45:1.5cm) .. (u);
        
\node at (-1,0) {$a$};
\node at (1,0) {$b$};

\end{tikzpicture}
\end{equation}
\vspace{-0.3cm}

\noindent and its Leavitt path algebra $L_{2,R}$, which is the universal 
$R$-algebra generated by elements $a,a^*,b,b^*$ satisfying 

\begin{equation}\label{eq: L2 relations}
a^*a=b^*b=1=aa^*+bb^*. 
\end{equation}
From these relations we see that $aa^*a=a$ and $b^*bb^*=b^*$, and hence $b^*a=b^*(aa^*+bb^*)a=b^*a+b^*a$. So $b^*a=0$, and a similar argument gives $a^*b=0$. Hence the universal $R$-algebra generated by $a$ and $b$ satisfying (\ref{eq: L2 relations}) is indeed the Leavitt path algebra for the above graph.

Note that when $R$ is a field $K$, $L_{2,K}$ is the 
Leavitt
algebra of module type $(1, 2)$.

\begin{definition}\label{def: projs}
Let $A$ be an involutive $R$-algebra. 
We call an element $u\in A$ a unitary if $u^*u=uu^*=1$. 
We call an element $p \in A$ a projection if $p = p^2 = p^*$.
We say that two projections are Murray-von Neumann equivalent, written $p_1 \sim p_2$, if there 
exist $x$ such that $p_1 = xx^*$ and $p_2 = x^*x$. 
If two projections $p_1,p_2$ are Murray-von Neumann equivalent, we can choose a witness $x$ such 
that $x x^* x = x$ 
(see \cite[Proposition 4.2.2]{BlackadarKTheoryBook}).
\end{definition}

\begin{remark}\label{rem: proj equiv}
The equivalence relation introduced above is often referred to as 
$*$-equivalence to distinguish it from the equivalence relation on idempotents 
given by letting $e,f$ be equivalent if there exist $x,y$ such that $e = xy$ 
and $f = yx$.
In general these notions of equivalence do not coincide, for instance they do not coincide on $M_2(\bbQ)$, see \cite[Theorem 1.12]{AraMatrixRings} for details and a much more general statement.
The authors thanks Pere Ara for pointing out the difference to them.  
\end{remark}

\begin{definition}\label{def: partial unitary}
Let $R$ be a commutative ring with unit and let $A$ be an involutive 
$R$-algebra. 
A {\em partial unitary} is an element $u \in A$ with 
$u^*u=uu^*=p$ for some projection $p\in A$.
\end{definition}

The subalgebra of $A$ generated by the partial unitary $u$ and $p$ is 
a $*$-homomorphic image of $L_R[z,z^{-1}]$, the Laurent polynomials over $R$. 
This allows one to apply any Laurent polynomial to $u$.
More elaborately: 

\begin{remark}\label{rmk: functional calculus}
$L_R[z,z^{-1}]$ is the universal $R$-algebra generated by the partial unitary 
$z$; that is, if $u \in A$ is a partial unitary there exists a $*$-homomorphism 
$\phi \colon L_R[z,z^{-1}] \to A$ such that 
\[
  \phi(z) = u, \quad \phi(z^{-1}) = u^*, \quad \text{and} \quad \phi(1) = p. 
\]
Let $q$ be a polynomial in $L_R[z,z^{-1}]$ and write 
\[
  q(z) = \sum_{n=-m}^m k_n z^{n}  
\]
where each $k_n \in R$. 
Then 
\[
  \phi(q) = \sum_{n=-m}^{-1} k_n (u^*)^{-n} + p + \sum_{n=1}^m k_n 
  u^{n}.  
\]
In our notation we will omit the map $\phi$ and simply write $q(u)$. 
\end{remark}

\begin{definition}\label{def: full spectrum}
Let $R$ be a commutative ring with unit and let $A$ be an involutive 
$R$-algebra. We say a partial unitary $u\in A$ has {\em full spectrum}\footnote{This term comes from the notion of full spectrum in $C^*$-algebra theory.} if 
$q(u) \neq 0$ for all nonzero $q \in R[x]$. 
\end{definition}

Note that if $u$ has full spectrum, then $q(u)\not= 0$ for all nonzero Laurent polynomials $q$.

\begin{example}\label{eg: cycle with no exit}
The most basic example of a full spectrum partial unitary in a Leavitt path 
algebra is a cycle $\alpha$ without an exit. This follows because the set 
of paths of real edges is linearly independent 
\cite[Proposition~4.9]{TomfordeLeavittOverRing}, and hence $q(\alpha) \neq 0$ 
for all non-zero $q \in R[x]$. 
\end{example}

\section{A generalised Cuntz-Krieger Uniqueness Theorem}\label{sec: gen CKUT}

We now prove a uniqueness theorem for Leavitt path algebras with coefficients 
in a unital, commutative ring $R$. This result generalises the uniqueness theorem of 
Tomforde \cite[Theorem~6.5]{TomfordeLeavittOverRing}, in which graphs are 
assumed to satisfy 
Condition (L), and is the algebraic analogue of Szymanski's uniqueness theorem 
for graph $C^*$-algebras \cite[Theorem~1.2]{SzymanskiUniqueness}. (See also Nagy and Reznikoff's uniquess theorem in \cite{NagyReznikoffAbelianCores}.)

\begin{theorem}[Generalised Cuntz-Krieger Uniqueness Theorem]\label{thm: Gen 
CKUT}
Let $E$ be a graph, $R$ a commutative ring with unit, and $\phi$ a ring 
homomorphism of $L_R(E)$ into a ring $S$.
Then $\phi$ is injective if and only if the following conditions are satisfied:
\begin{enumerate}
\item[(G1)] $\phi(rv) \neq 0$ for all $v\in E^0$ and all $r \in 
R\setminus\{0\}$; and
\item[(G2)]  $\phi(q(\alpha)) \neq 0$ for all cycles $\alpha$ without an exit, 
and all 
nonzero polynomials $q \in R[x]$.
\end{enumerate} 
\end{theorem}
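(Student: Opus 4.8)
The plan is to prove the "only if" direction quickly and concentrate on the "if" direction. For the forward implication, suppose $\phi$ is injective. Then (G1) is immediate from the fact (recalled in Section~\ref{sec: background}, from \cite[Proposition~3.4]{TomfordeLeavittOverRing}) that $rv \neq 0$ in $L_R(E)$ for all $v \in E^0$ and $0 \neq r \in R$. For (G2), if $\alpha$ is a cycle without an exit then by Example~\ref{eg: cycle with no exit} it is a full-spectrum partial unitary, so $q(\alpha) \neq 0$ in $L_R(E)$ for every nonzero $q \in R[x]$, and injectivity of $\phi$ gives $\phi(q(\alpha)) \neq 0$.

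For the converse, assume (G1) and (G2) and let $0 \neq x \in L_R(E)$; I must show $\phi(x) \neq 0$. Write $x = \sum_{i} r_i \alpha_i \beta_i^*$ as a finite $R$-linear combination of monomials with $r(\alpha_i) = r(\beta_i)$. The standard strategy (following Szymanski, and Tomforde's Condition~(L) argument) is to "compress" $x$ to a manageable normal form by multiplying on the left and right by suitable projections of the form $\mu\mu^*$ and by vertex projections, then to reduce further using the structure of the resulting algebra. After such a reduction one arrives at an element supported on paths through a single vertex $v$, of the shape $p v + (\text{terms }\gamma\delta^* \text{ with }|\gamma| \text{ or }|\delta| \geq 1)$ where $p \in R$; one then wants to either (a) show some nonzero multiple of a vertex survives, handled by (G1), or (b) after cutting down, land inside the subalgebra generated by a cycle without an exit, where the element becomes $q(\alpha)$ or $q(\alpha)$ times a projection for some nonzero $q \in R[x]$, handled by (G2). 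The key combinatorial input is that if every cycle based at the relevant vertex has an exit, one can keep extending/exiting paths to strip away the higher-order $\gamma\delta^*$ terms and isolate a vertex term (this is where Condition~(L) is used in Tomforde's proof, and where Szymanski's idea of treating exit-free cycles separately is needed when (L) fails).

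The main obstacle I anticipate is the bookkeeping in the compression step over an arbitrary commutative ring $R$: the coefficients $r_i$ need not be units and $R$ need not be a domain, so one cannot simply normalize a leading coefficient or argue "generically." One must track how the orthogonality relations $e^*f = \delta_{e,f} r(e)$ and the defining sum relation (5) interact with $R$-linear combinations, and ensure that when a vertex term is isolated its coefficient in $R$ is genuinely nonzero (so that (G1) applies) rather than accidentally cancelled. A clean way to organize this is to introduce a length/degree filtration on monomials $\alpha\beta^*$ and induct on the maximal degree appearing in $x$, at each stage using multiplication by edges to lower the degree while monitoring the surviving coefficients; the exit-free cycle case is quarantined by observing that the subalgebra generated by such a cycle and its source vertex is isomorphic to a quotient of $L_R[z,z^{-1}]$ (Remark~\ref{rmk: functional calculus}), on which nonvanishing is exactly the full-spectrum hypothesis (G2). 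I would also isolate as a lemma the reduction showing that a nonzero element of $L_R(E)$ can, after left/right multiplication by elements of the form $\gamma(\cdot)\delta^*$, be brought to one of these two normal forms; that lemma is the technical heart, and the rest is then a short case analysis invoking (G1) and (G2).
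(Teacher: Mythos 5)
Your proposal follows essentially the same route as the paper: the "only if" direction via linear independence of real paths, and the "if" direction by compressing a nonzero element (equivalently, a nonzero element of the vertex-free ideal $\ker\phi$) to a normal form $s_0u+\sum_i s_i\beta_i$ with the $\beta_i$ cycles at $u$, then splitting into the case where cycles at $u$ have exits (killed by an exit path, reducing to (G1)) and the case of an exit-free cycle (where the element is $q(\alpha)$ and (G2) applies). The reduction lemma you flag as the technical heart is precisely the content of Lemma~\ref{lem:elmentinvertexfree}, Lemma~\ref{lem:killingpath} and Proposition~\ref{prop:loopelements}, and your proposed handling of the coefficient bookkeeping (first passing to a polynomial in real edges, then left-multiplying by a shortest path to force a nonzero vertex coefficient) matches the paper's argument.
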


A version of this Theorem for fields can be derived either from \cite[Proposition~2]{RangaswamyGeneratorsOfIdeals} or from \cite[Proposition~3.1]{PinoMartinMartinSilesSocle}. 
We work over rings rather than fields, so must do some extra work. 

To prove Theorem~\ref{thm: Gen 
CKUT} we study the ideal structure of Leavitt path algebras over a commutative ring.
In Section 7 of \cite{TomfordeLeavittOverRing} Tomforde carries out an investigation into so called basic ideals, which are ideals $I$ with the property that if $rv \in I$ for some nonzero $r$ then $v \in I$. 
Here we will be interested in a different class of ideals, namely those that contain no elements of the form $rv$.  

\begin{definition}\label{def: vertex free}
An ideal $I \subseteq L_R(E)$ is called \emph{vertex free} if for $r \in R$ 
and $v \in E^0$ we have
\[
	r v \in I \implies r = 0.
\]
\end{definition}

We now prove some structure results for vertex free ideals. 
We follow the proof of the Cuntz-Krieger Uniqueness Theorem \cite[Theorem~6.5]{TomfordeLeavittOverRing}, but we keep track of what happens at cycles without exits. 
Our goal is to show that, in a certain sense, vertex free ideals come from cycles without exits, see Proposition \ref{prop:loopelements} for a precise statement.  

In the following two lemmas $E$ is a graph, $R$ is a commutative ring with unit and we work in $L_R(E)$. 

\begin{lemma} \label{lem:elmentinvertexfree}
For every nonzero vertex free ideal $I$ there exists a vertex $u \in 
E^0$, cycles $\beta_1, \beta_2, \ldots, \beta_n$ based at $u$,  and elements $r_0, r_1, 
\ldots, r_n \in R\setminus\{0\}$ such that 
\[
	0\not=r_0 u + \sum_{i=1}^n r_i \beta_i \in I.
\] 
\end{lemma}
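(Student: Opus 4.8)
The plan is to take an arbitrary nonzero element $x$ of the vertex free ideal $I$, write it in the standard form $x = \sum_{j} r_j \alpha_j \beta_j^*$ with $\alpha_j,\beta_j \in E^*$ and $r_j \in R\setminus\{0\}$, and then successively ``trim'' $x$ by multiplying on the left and right by vertices, edges, and ghost edges until we arrive at an element of the required shape $r_0 u + \sum_{i=1}^n r_i \beta_i$. The guiding idea, borrowed from Tomforde's proof of the Cuntz-Krieger Uniqueness Theorem \cite[Theorem~6.5]{TomfordeLeavittOverRing}, is that since $I$ is vertex free it cannot contain any nonzero multiple $r v$ of a vertex; so whatever reduction we run must get ``stuck'' before reaching a pure vertex term, and the obstruction to collapsing it fully can only come from cycles. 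First I would fix a vertex $w = r(\alpha_{j_0})$ appearing as a range of some $\alpha_{j_0}$ and cut down to $w x w$; after discarding the (finitely many) monomials that vanish this is still a nonzero element of $I$, now a sum of terms $r_j \alpha_j \beta_j^*$ with $r(\alpha_j) = r(\beta_j) = w$ (using relation (4) to kill mismatched ghost/real compositions) — wait, more carefully, $wxw$ keeps only the monomials with $s(\alpha_j)=w$ and $s(\beta_j)=w$, so I should instead choose a monomial of maximal length and multiply by $\alpha_{j_0}^*$ on the left and $\alpha_{j_0}$ on the right to shrink the $\alpha$'s and $\beta$'s.

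More precisely, among all monomials appearing in $x$ pick one, say $r_{j_0}\alpha_{j_0}\beta_{j_0}^*$, with $|\alpha_{j_0}|$ maximal, and replace $x$ by $\alpha_{j_0}^* x \alpha_{j_0}$; by relation (4), the $j_0$ term becomes $r_{j_0} u$ where $u = r(\alpha_{j_0})$, and a general term $r_j\alpha_j\beta_j^*$ becomes $r_j(\alpha_{j_0}^*\alpha_j)(\beta_j^*\alpha_{j_0})$, which is nonzero only when $\alpha_j$ and $\beta_j$ are both extensions of (or extended by) $\alpha_{j_0}$ — and by maximality of $|\alpha_{j_0}|$, $\alpha_{j_0}^*\alpha_j$ is either $0$ or a vertex or a ghost path, not a real path. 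Iterating this kind of reduction (alternately on the left and right, using real and ghost edges) I should be able to arrange that the surviving element $y \in I$ is a sum $\sum_k s_k \gamma_k \delta_k^*$ where all $\gamma_k,\delta_k$ have source and range equal to a single fixed vertex $u$, and at least one term is $s_0 u$. The key point to extract is that, because $y \in I$ and $I$ is vertex free, $y$ cannot equal $s_0 u$ alone, so some genuine $\gamma_k\delta_k^*$ with $|\gamma_k| + |\delta_k| > 0$ survives; applying the involution if necessary I can assume some $\gamma_k$ with $|\gamma_k| \ge 1$ is present.

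The final step is to convert terms of the form $\gamma_k\delta_k^*$ (loops at $u$, but possibly with a ghost tail) into honest cycles $\beta_i$ based at $u$. Here I would multiply $y$ on the right by a long enough real path: since $u$ occurs, relation (5) or just the path structure lets me find paths emanating from $u$ and returning to $u$; multiplying by $\beta\beta^*$-type correction terms and then by a suitable real loop $\mu$ with $s(\mu)=r(\mu)=u$ and $|\mu|$ larger than all the $|\delta_k|$, each ghost tail $\delta_k^*$ gets absorbed: $\delta_k^*\mu$ becomes a genuine real path $\mu'$ by relation (4), so $\gamma_k\delta_k^*\mu = \gamma_k\mu'$ is a real path at $u$ — and by choosing $\mu$ to be a power of a single returning loop we can force $\gamma_k\mu'$ to actually be a cycle based at $u$ (its source and range are both $u$), while the term $s_0 u \cdot \mu$ contributes $s_0\mu$, itself a cycle based at $u$. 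After collecting, relabeling, and deleting any monomials whose coefficient happens to vanish, we obtain exactly $r_0 u + \sum_{i=1}^n r_i\beta_i \in I$ with all $r_i \neq 0$; and this is nonzero because the cycles $\beta_i$ are distinct real paths, hence linearly independent by \cite[Proposition~4.9]{TomfordeLeavittOverRing}, and the coefficients are nonzero in $R$, so by \cite[Proposition~3.4]{TomfordeLeavittOverRing} (specifically $rv \neq 0$ for $r \neq 0$, extended to linear combinations of independent path-monomials) the element is nonzero in $L_R(E)$.

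I expect the main obstacle to be bookkeeping in the reduction: ensuring that at each trimming step the chosen ``pivot'' monomial genuinely survives with the right coefficient (that no cancellation in $R$ wipes it out), that the cut-downs simultaneously control \emph{all} monomials and not just the pivot, and — most delicate — arranging at the end that every surviving $\gamma_k\delta_k^*$ really does land on a cycle \emph{based at the same vertex $u$}, rather than merely a path from $u$ to $u$ that passes through $u$ in the middle (which one then splits into shorter cycles). Making the ``multiply by a high power of a returning loop'' argument precise, including the case where $u$ is a sink or infinite emitter so that no convenient returning loop exists, will require care; in the sink case one has to rule that situation out because a sink forces the reduction to terminate at a pure vertex term, contradicting vertex-freeness, and in the infinite-emitter case one uses that still \emph{some} edge out of $u$ can be followed back.
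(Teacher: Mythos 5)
There is a genuine gap, and it is concentrated in your treatment of ghost edges. The paper's proof begins by invoking \cite[Lemma~6.4]{TomfordeLeavittOverRing} to replace the arbitrary nonzero $x\in I$ by a nonzero element that is a polynomial in \emph{real} edges only; after that, every reduction (left-multiplying by a vertex, left-multiplying by $\gamma_1^*$ for a $\gamma_1$ of \emph{minimal} length, right-multiplying by $u$) stays inside the span of real paths, and nonvanishing is read off from the linear independence of $E^*$. You instead keep general monomials $\gamma_k\delta_k^*$ and propose to absorb the ghost tails at the end by right-multiplying by a high power $\mu$ of a returning real loop at $u$. That step fails to produce an element of the required form: the vertex term $s_0u$ becomes $s_0\mu$, a cycle, so the output is a combination of cycles with \emph{no} nonzero coefficient on $u$. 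The statement requires $r_0\neq 0$, and this is not cosmetic --- Proposition~\ref{prop:loopelements} uses exactly that nonzero vertex coefficient (via $\gamma^*w\gamma=r_0\,r(\gamma)$) to contradict vertex-freeness. Your step also presupposes that a real returning loop at $u$ exists, which is not automatic and which you only gesture at fixing.

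The earlier trimming step is also not yet a proof. Multiplying by $\alpha_{j_0}^*$ on the left and $\alpha_{j_0}$ on the right turns the pivot monomial into $r_{j_0}\,\beta_{j_0}^*\alpha_{j_0}$, which is a vertex only when $\beta_{j_0}=\alpha_{j_0}$; and over a general commutative ring $R$ distinct monomials can collide after such a multiplication and their coefficients can cancel, so the claim that the pivot ``survives'' needs an argument. These are precisely the difficulties that Tomforde's Lemma~6.4 is designed to dispose of, and your sketch does not supply a substitute for it. (One small point in your favour: your worry about paths from $u$ to $u$ that pass through $u$ in the middle is unnecessary --- the paper's definition of ``cycle'' does not require vertex simplicity, so such paths already qualify as cycles based at $u$.) The repair is to follow the paper: first pass to a real-edge polynomial via \cite[Lemma~6.4]{TomfordeLeavittOverRing}, then arrange a nonzero vertex coefficient by left-multiplying by $\gamma_1^*$ with $\gamma_1$ of minimal length (so every nonzero $\gamma_1^*\gamma_j$ is a genuine real path of positive length), and only then cut down to cycles by right-multiplying by $u$.
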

\begin{proof}
Since $I$ is nonzero, there is a nonzero $x \in I$.
By \cite[Lemma 6.4]{TomfordeLeavittOverRing} we can assume that $x$ is a 
polynomial in real edges. 
As $E^0$ form a set of local units for $L_R(E)$, we can find some $u \in E^0$ 
with $y = ux \neq 0$.
Since $I$ is vertex free, $y$ has the form 
\[
	y = s_0 u + \sum_{j=1}^m s_j \gamma_j,
\] 
for $s_j \in R$, nonzero for $j \neq 0$, and $\gamma_j$ distinct paths of 
positive length with source $u$.

We now claim that there exists a nonzero $z \in I$ of the form 
\begin{eqnarray}\label{eqn: z form}
	z = t_0 u + \sum_{l=1}^k t_l \beta_l,
\end{eqnarray}
where $t_0, t_1, \ldots, t_k$ are all nonzero, and $\beta_l$ distinct paths 
of positive length with source $u$. If $s_0 \neq 0$, then $z = y$ does the 
job.  
If $s_0 = 0$, we relabel the terms of $y$ so that $|\gamma_1| \leq |\gamma_2| 
\leq \cdots \leq |\gamma_m|$, and consider $\gamma_1^* y$. 
Then 
\[
	\gamma_1^* y = s_1 r(\gamma_1) + \sum_{j = 2}^m s_j \gamma_1^* 
\gamma_j.
\]
Since $I$ is vertex free, at least one of the $\gamma_1^* \gamma_j$ is 
nonzero, and since $\gamma_1$ has minimal length among the $\gamma_j$, 
whenever $\gamma_1^* \gamma_j$ is nonzero it is a path of positive length 
with source $r(\gamma_1)$. 
Thus 
\[
\gamma_1^* y = s_1 r(\gamma_1) + \sum_{l = 2}^n r_l \beta_l
\]  
for some non-zero $r_l \in R$ and distinct paths $\beta_l$ with source $r(\gamma_1)$.
By \cite[Proposition 4.9]{TomfordeLeavittOverRing} $E^*$ is linearly 
independent in $L_R(E)$, so since $s_1 \neq 0$ and since all the $\beta_l$ have 
positive length we have $\gamma_1^*y \neq 0$. Hence $z=\gamma_1^*y$ has the 
desired form. 

Now, fix $z$ of the form (\ref{eqn: z form}) and consider $w = z u \in I$. We have 
\[
  w = t_0 u + \sum_{\{l : s(\beta_l) = u = r(\beta_l)\}} t_l \beta_l.
\]
Since $t_0 \neq 0$ and $I$ is vertex free, the set $\{l : s(\beta_l) = u = 
r(\beta_l)\}$ is nonempty and so $w$ has the desired form. 
As we noted in our construction of $z$, since $t_0 \neq 0$ and  
all the $\beta_l$ have positive length, we have $w \neq 0$.
\end{proof}

\begin{lemma} \label{lem:killingpath}
Let $\beta_1, \beta_2, \ldots, \beta_n$ be distinct cycles all based at some 
vertex $u \in E^0$.
If cycles based at $u$ have exits, then there exists $\gamma \in 
E^*$ 
with $s(\gamma) = u$ and such that $\gamma^* \beta_i \gamma = 0$ for all $i = 
1,2,\ldots,n$.
\end{lemma}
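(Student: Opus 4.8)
The statement: given distinct cycles $\beta_1,\dots,\beta_n$ all based at $u$, where "cycles based at $u$ have exits", produce a path $\gamma$ with $s(\gamma)=u$ killing all $\beta_i$ via $\gamma^*\beta_i\gamma=0$.

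The plan is to reduce the lemma to a combinatorial statement about the graph and then settle that statement using the exit hypothesis. Write $\beta^\infty=\beta\beta\beta\dots$ for the infinite path obtained by repeating a cycle $\beta$ based at $u$. The reduction rests on the following computation: if $\gamma$ is a finite path with $s(\gamma)=u$ and $\beta$ is a cycle based at $u$, then $\gamma^*\beta\gamma\ne 0$ forces $\gamma$ to be an initial subpath of $\beta^\infty$. I would prove this by induction on $|\gamma|$. Using relation~(4) repeatedly, $\gamma^*\beta=0$ unless either $\gamma$ is an initial subpath of $\beta$ (possible when $|\gamma|\le|\beta|$) or $\beta$ is an initial subpath of $\gamma$ (when $|\gamma|>|\beta|$); in the remaining cases $\gamma^*\beta\gamma=0$ already. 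In the first case $\gamma$ is a fortiori an initial subpath of $\beta^\infty$. In the second case, writing $\gamma=\beta\epsilon$, one computes $\gamma^*\beta=\epsilon^*$ and hence $\gamma^*\beta\gamma=\epsilon^*\beta\epsilon$; since $s(\epsilon)=r(\beta)=u$ and $|\epsilon|<|\gamma|$, the inductive hypothesis shows $\epsilon$ is an initial subpath of $\beta^\infty$, whence so is $\gamma=\beta\epsilon$. Granting this, it suffices to exhibit one finite path $\gamma$ with $s(\gamma)=u$ that is \emph{not} an initial subpath of $\beta_i^\infty$ for any $i=1,\dots,n$.

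To produce such a $\gamma$ I would build a path greedily while tracking a counting function. For a finite path $\mu$ with $s(\mu)=u$, let $N(\mu)$ be the number of indices $i\in\{1,\dots,n\}$ such that $\mu$ is an initial subpath of $\beta_i^\infty$; regarding $u$ as the path of length $0$ at $u$, we have $N(u)=n$. Whenever $N(\mu)>0$, the vertex $r(\mu)$ is not a sink, and moreover
\[
  \sum_{g\in s^{-1}(r(\mu))}N(\mu g)=N(\mu),
\]
because each $\beta_i^\infty$ counted by $N(\mu)$ extends $\mu$ by a unique next edge. Starting from $\mu_0=u$, repeatedly set $\mu_{k+1}=\mu_k g$ for some $g\in s^{-1}(r(\mu_k))$ minimising $N(\mu_k g)$, and halt as soon as $N(\mu_k)=0$, taking $\gamma=\mu_k$. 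By the displayed identity $N(\mu_{k+1})\le N(\mu_k)$ always, and the inequality is strict whenever $r(\mu_k)$ emits at least two edges, since the sum then splits into at least two summands. As $N$ is a nonnegative integer at most $n$, it can decrease at most $n-1$ times, so there is a stage $K$ after which every $r(\mu_k)$ emits exactly one edge.

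This last situation is impossible under the hypothesis. From stage $K$ onwards the forward path is forced, say $r(\mu_K)=w_0\to w_1\to\dots$ with each $w_j$ emitting a unique edge. Since $N(\mu_K)\ge 1$, some $\beta_i^\infty$ has $\mu_K$ as an initial subpath, so $\beta_i^\infty=\mu_K w_0 w_1\dots$; as $\beta_i^\infty$ is periodic with period $|\beta_i|$, the vertices $w_0,w_1,\dots$ include every vertex of the cycle $\beta_i$. Each such vertex emits a single edge, so $\beta_i$ has no exit, contradicting the assumption that cycles based at $u$ have exits. Hence the greedy construction terminates and yields $\gamma$, which by the first paragraph satisfies $\gamma^*\beta_i\gamma=0$ for every $i$. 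The computation in the first step is routine bookkeeping with the Leavitt relations; the crux — and the only place the hypothesis enters — is this escape argument, i.e.\ showing that the greedily built path cannot remain trapped inside $\bigcup_i\{\text{initial subpaths of }\beta_i^\infty\}$ forever.
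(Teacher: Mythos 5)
Your proposal is correct, but it proves the lemma by a genuinely different route from the paper. The paper's proof is short and explicit: it fixes a cycle $\tau$ of \emph{minimal} length based at $u$, writes $\tau = \mu\nu$ where $r(\mu)$ emits a second edge $f$, and takes $\gamma = \tau^m\mu f$ with $\tau^m$ longer than every $\beta_i$; then either $\beta_i$ fails to be an initial segment of $\tau^m$, so $(\tau^m)^*\beta_i = 0$, or minimality forces $\beta_i = \tau^k$ and a direct computation ends in $f^*\nu\cdots = 0$. You instead first characterise the obstruction --- $\gamma^*\beta\gamma \neq 0$ forces $\gamma$ to trace an initial segment of $\beta^\infty$ (your induction on $|\gamma|$ is sound, with the case $\gamma = \beta\epsilon$ handled by $\gamma^*\beta\gamma = \epsilon^*\beta\epsilon$) --- and then escape all the sets of initial segments of the $\beta_i^\infty$ simultaneously by a greedy counting argument on $N(\mu)$. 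The partition identity $\sum_{g}N(\mu g) = N(\mu)$, the strict drop at any vertex emitting at least two edges, and the final contradiction (a forever-forced tail would make some $\beta_i$ exitless) are all correct, and your argument also handles infinite emitters transparently. What the paper's approach buys is brevity and a closed-form $\gamma$; what yours buys is a conceptual identification of exactly which $\gamma$ fail and a cleaner isolation of where the exit hypothesis enters. Both are valid proofs of the lemma.
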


Note that either all cycles based at a vertex $u$ have 
exits, or no cycle based at $u$ has exits, which explains the wording of the 
above lemma.
In the latter case there is a cycle $\alpha$ of minimal length based at $u$ and all other cycles are of the form $\alpha^n$. 

\begin{proof}[Proof of Lemma~\ref{lem:killingpath}]
Pick a cycle $\tau$ based at $u$ of minimal length. 
Since $\tau$ has an exit we can write $\tau = \mu \nu$ for $\mu \in 
E^*, \nu \in E^* \setminus E^0$ with $|s^{-1}(r(\mu))| \ge 2$.
Let $f \in s^{-1}(r(\mu))$ be distinct from the first edge in $\nu$. 
Fix an $m \in \N$ such that $\tau^m$ is longer than all the $\beta_i$, and put
\[
	\gamma = \tau^m \mu f.
\]	
To compute $\gamma^* \beta_i \gamma$ we look at two cases. First suppose that 
$\beta_i$ is an initial segment of $\tau^m$. Since $\tau$ is a cycle of minimal 
length and $\beta_i$ is a cycle we have 
$\beta_i = \tau^k$ for some $1 \leq k < m$.
Hence
\[
	\gamma^* \beta_i \gamma = f^* \mu^* (\tau^*)^m \tau^k \tau^m \mu f = 
f^* \mu^* \tau^k \mu f = f^* \mu^* \mu \nu \tau^{k-1} \mu f 
= f^* \nu \tau^{k-1} \mu f = 0,
\]	
since $f$ is distinct from the first edge in $\nu$.

Now suppose $\beta_i$ is not an initial segment of $\tau^m$. Then $(\tau^m)^* 
\beta_i = 0$, and hence $\gamma^* \beta_i \gamma = 0$.
\end{proof}

\begin{proposition} \label{prop:loopelements}
Let $E$ be a graph and $R$ a commutative ring with unit. 
If $I \subseteq L_R(E)$ is a non-zero vertex free ideal, then $I$ contains a 
nonzero 
element of the form
\[
	s_0 u + \sum_{i=1}^n s_1 \alpha^i,
\]
for some vertex $u \in E^0$, a cycle with no exits $\alpha$ based at $u$, and 
$s_0, s_1, \ldots, s_n \in R$ not all zero. 
\end{proposition}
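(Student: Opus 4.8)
The plan is to combine Lemma~\ref{lem:elmentinvertexfree} with Lemma~\ref{lem:killingpath}, using the fact that an ideal is closed under the operations $x \mapsto \gamma^* x \gamma$. First I would apply Lemma~\ref{lem:elmentinvertexfree} to the non-zero vertex free ideal $I$ to obtain a vertex $u \in E^0$, cycles $\beta_1, \ldots, \beta_n$ based at $u$, and non-zero scalars $r_0, \ldots, r_n \in R$ such that
\[
	x := r_0 u + \sum_{i=1}^n r_i \beta_i \in I, \qquad x \neq 0.
\]
Now there is a dichotomy at the vertex $u$: either no cycle based at $u$ has an exit, or every cycle based at $u$ has an exit (as noted after Lemma~\ref{lem:killingpath}).

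In the first case, by the remark following Lemma~\ref{lem:killingpath} there is a cycle $\alpha$ of minimal length based at $u$ with no exit, and every cycle based at $u$ is a power $\alpha^{k}$ for some $k \ge 1$. Hence each $\beta_i = \alpha^{k_i}$, and collecting terms, $x$ already has the form $s_0 u + \sum_{i=1}^{n} s_i \alpha^{i}$ with $s_0 = r_0 \neq 0$ and the remaining $s_i$ coming from the $r_j$; in particular the scalars are not all zero since $s_0 \neq 0$. So $x$ itself is the desired element and we are done. The only mild care needed here is to re-index so the exponents run consecutively from $1$ to some $n$, allowing zero coefficients for exponents not originally present; this is harmless.

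In the second case, every cycle based at $u$ has an exit, so Lemma~\ref{lem:killingpath} applies to the cycles $\beta_1, \ldots, \beta_n$ and yields a path $\gamma \in E^*$ with $s(\gamma) = u$ such that $\gamma^* \beta_i \gamma = 0$ for every $i$. Since $I$ is an ideal, $\gamma^* x \gamma \in I$. But
\[
	\gamma^* x \gamma = r_0 \gamma^* u \gamma + \sum_{i=1}^n r_i \gamma^* \beta_i \gamma = r_0 \gamma^* \gamma = r_0 \, r(\gamma),
\]
using $s(\gamma) = u$ and relation~(4) iterated along $\gamma$. Since $r_0 \neq 0$ and $I$ is vertex free, $r_0\, r(\gamma) \in I$ forces $r_0 = 0$, a contradiction. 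Hence the second case cannot occur, and the conclusion follows from the first case.

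The main obstacle, such as it is, lies in making sure the hypotheses of the two lemmas line up: Lemma~\ref{lem:elmentinvertexfree} produces arbitrary cycles based at $u$, and one must correctly invoke the all-or-nothing dichotomy for exits at $u$ to split into the two cases. The second case is really where the work is, but Lemma~\ref{lem:killingpath} does all the heavy lifting, so the argument is short; the first case then essentially just re-packages the output of Lemma~\ref{lem:elmentinvertexfree}. I would also remark that the statement as written has $s_1$ where presumably $s_i$ is intended inside the sum, and that the coefficients being ``not all zero'' is automatic from $s_0 \neq 0$ in the surviving case.
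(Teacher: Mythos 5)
Your proposal is correct and follows essentially the same route as the paper's proof: apply Lemma~\ref{lem:elmentinvertexfree}, rule out the case where cycles at $u$ have exits via Lemma~\ref{lem:killingpath} and the vertex-free hypothesis, and then observe that the $\beta_i$ are powers of the minimal exitless cycle $\alpha$. Your side remarks (the $s_1$/$s_i$ typo, and that ``not all zero'' follows from $s_0 = r_0 \neq 0$) are both accurate.
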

\begin{proof}
Since $I$ is vertex free we can apply Lemma~\ref{lem:elmentinvertexfree} to 
find a nonzero $w \in I$ of the form 
\[
	w = r_0 u + \sum_{i=1}^n r_i \beta_i,
\]
for some vertex $u \in E^0$, cycles $\beta_1, \beta_2, \ldots, \beta_n$ based 
at $u$, and $r_0, r_1, \ldots, r_n \in R\setminus\{0\}$.
If cycles based at $u$ had exits, we could apply Lemma \ref{lem:killingpath} 
to find a path 
$\gamma \in E^*$ with $s(\gamma) = u$ and $\gamma^* \beta_i \gamma = 0$ for 
all $i = 1,2,\ldots, n$. 
Hence 
\[
	\gamma^* w \gamma = r_0 \gamma^* u \gamma + \sum_{i=1}^n r_i \gamma^* 
\beta_i \gamma = r_0 r(\gamma),
\]
which contradicts that $I$ is vertex free. We conclude that cycles based at $u$ 
do not have exits. Since this is the case, there is a cycle $\alpha$ based at 
$u$ of minimal length.
Furthermore all the $\beta_i$ must have the form $\beta_i = \alpha^{k_i}$ for 
some $k_i \in \N$.
So $w$ has the desired form (by padding the sum with zero elements if 
necessary).
\end{proof}

\begin{proof}[Proof of Theorem~\ref{thm: Gen CKUT}]
The ``only if'' direction is immediate. For the ``if'' direction suppose that 
$\phi\colon L_R(E)\to S$ satisfies (G1) and (G2). Suppose for contradiction 
that $I=\ker\phi$ is nonzero. We know from (G1) that $I$ is a vertex free 
ideal. Then by Proposition \ref{prop:loopelements} there is a 
nonzero element $x \in I$ of the form 
\[
x=	s_0 u + \sum_{i=1}^n s_i \alpha^i,
\]
for some vertex $u \in E^0$, a cycle with no exits $\alpha$ based at $u$, and 
elements $s_0, s_1, \ldots, s_n$ in $R$ not all zero. This means the nonzero 
polynomial $q(x)=\sum_{i=0}^ns_ix^i$ in $R[x]$ satisfies 
\[
 \phi(q(\alpha))=0,
\]
which contradicts (G2). Hence we must have $I=\{0\}$, and $\phi$ is injective. 
\end{proof}            

We can rephrase the Generalised Cuntz-Krieger Uniqueness Theorem for 
$*$-homomorphisms into involutive $R$-algebras. It is this result we apply in 
the next section. 

\begin{corollary}\label{cor: Gen CKUT StarAlg}
Let $E$ be a graph, $R$ a commutative ring with unit, and $\phi$ a 
$*$-homomorphism of $L_R(E)$ into an involutive $R$-algebra $A$.
Then $\phi$ is injective if and only if the following conditions are satisfied:
\begin{enumerate}
\item[(1)] $\phi(rv) \neq 0$ for all $v\in E^0$ and all $r \in R \setminus 
\{0\}$; and
\item[(2)] $\phi(\alpha)$ has full spectrum in $A$ for every cycle $\alpha$ 
without an exit.
\end{enumerate} 
\end{corollary}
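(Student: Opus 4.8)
The plan is to deduce Corollary~\ref{cor: Gen CKUT StarAlg} directly from Theorem~\ref{thm: Gen CKUT} by matching up the two pairs of hypotheses. Condition~(1) is literally condition~(G1), so nothing needs doing there; the only content is to check that, for a $*$-homomorphism into an involutive $R$-algebra, condition~(2) — that $\phi(\alpha)$ has full spectrum — is equivalent to condition~(G2) — that $\phi(q(\alpha)) \neq 0$ for every cycle $\alpha$ without an exit and every nonzero $q \in R[x]$. Once that equivalence is in hand, the corollary follows by applying the theorem to the ring homomorphism underlying $\phi$.

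First I would observe that if $\alpha = e_1 \cdots e_m$ is a cycle without an exit based at a vertex $v = s(\alpha) = r(\alpha)$, then $\phi(\alpha)$ is a partial unitary in $A$: indeed $\alpha^*\alpha = v$ and $\alpha\alpha^* = v$ by relations (2)--(5) of Definition~\ref{def: Leavitt path algebra over field} (here one uses that $v$ emits only the single edge $e_1$, and similarly at each intermediate vertex, so that relation~(5) collapses to $s(e_i) = e_i e_i^*$ along the cycle), and $\phi$ preserves the involution, so $\phi(\alpha)^*\phi(\alpha) = \phi(\alpha)\phi(\alpha)^* = \phi(v)$, with $\phi(v)$ a projection. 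Hence the functional calculus of Remark~\ref{rmk: functional calculus} applies to $u := \phi(\alpha)$, and "$\phi(\alpha)$ has full spectrum" is meaningful in the sense of Definition~\ref{def: full spectrum}. The key point is that the functional calculus $\phi$ of Remark~\ref{rmk: functional calculus} is a $*$-homomorphism from $L_R[z,z^{-1}]$, so it is compatible with the $*$-homomorphism $L_R(E) \to A$: for a polynomial $q \in R[x]$, the element $q(\phi(\alpha)) \in A$ obtained via functional calculus equals $\phi(q(\alpha))$, where $q(\alpha) \in L_R(E)$ is computed using the partial unitary $\alpha$ inside $L_R(E)$ (which makes sense by Example~\ref{eg: cycle with no exit}). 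This is just naturality of the functional calculus and follows because both $u \mapsto \phi(q(\alpha))$ and $u \mapsto q(\phi(\alpha))$ are the value at $z \mapsto \phi(\alpha)$ of the unique $*$-homomorphism $L_R[z,z^{-1}] \to A$ extending that assignment.

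Given this compatibility, condition~(2) says precisely that $q(\phi(\alpha)) \neq 0$ for all nonzero $q \in R[x]$, i.e. $\phi(q(\alpha)) \neq 0$ for all nonzero $q \in R[x]$, which is exactly condition~(G2) for the cycle $\alpha$. Conversely (G2) gives (2). Therefore the hypothesis pair (1)--(2) of the corollary is equivalent to the hypothesis pair (G1)--(G2) of the theorem, and since a $*$-homomorphism into an involutive $R$-algebra is in particular a ring homomorphism into a ring, Theorem~\ref{thm: Gen CKUT} yields that $\phi$ is injective if and only if (1) and (2) hold.

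I do not expect a serious obstacle here; the corollary is essentially a restatement. The one place that warrants care is the claim that $\phi(\alpha)$ is genuinely a partial unitary and that $q(\phi(\alpha))$ computed by the functional calculus of Remark~\ref{rmk: functional calculus} agrees with $\phi$ applied to $q(\alpha) \in L_R(E)$ — in other words, verifying that the ambient functional calculus in $A$ restricts correctly along $\phi$. This is where one must be slightly attentive to the definitions of $q(u)$ in Remark~\ref{rmk: functional calculus} and the fact that $\alpha$ itself is a full-spectrum partial unitary in $L_R(E)$ by Example~\ref{eg: cycle with no exit}, so that the symbol $q(\alpha)$ in (G2) is the same element as the one produced by the functional calculus applied to the partial unitary $\alpha \in L_R(E)$. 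Everything else is immediate.
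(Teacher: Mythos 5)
Your proposal is correct and follows essentially the same route as the paper: the paper's proof is just the one-line observation that $\phi(q(\alpha)) = q(\phi(\alpha))$ makes (2) equivalent to (G2), after which Theorem~\ref{thm: Gen CKUT} applies. Your additional checks --- that $\phi(\alpha)$ is a partial unitary (so ``full spectrum'' makes sense for it) and that the functional calculus is natural with respect to $*$-homomorphisms --- are correct and merely spell out what the paper leaves implicit.
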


\begin{proof}
Since for any cycle $\alpha$ without an exit, and for any $q(x)\in R[x]$, we 
have $\phi(q(\alpha))=q(\phi(\alpha))$, it follows that (2) is equivalent to (G2). 
The result now follows from Theorem~\ref{thm: Gen CKUT}.
\end{proof}

\section{Embedding $L_R(E)$ into $L_{2,R}$}\label{sec: embedding L(E) in L2}

The main result of this paper is the following.

\begin{theorem}\label{thm: Embed L(E) into L2}
Let $R$ be a commutative ring with unit and let $E$ be a directed graph. 
If $E$ is countable, then there is a $*$-algebraic embedding of the Leavitt path algebra $L_R(E)$ into $L_{2,R}$, and if $E^0$ is finite, then this embedding can be chosen to be unital. 
If $E$ is uncountable then there is no embedding of $L_R(E)$ into $L_{2,R}$ as $R$-modules. 
\end{theorem}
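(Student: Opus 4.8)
The plan is to prove the three parts of Theorem~\ref{thm: Embed L(E) into L2} in turn, with the bulk of the work going into the countable case.

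\textbf{Strategy for the countable case.} First I would reduce to the finite-vertex case via a desingularization/telescoping argument: given a countable graph $E$, one can realize $L_R(E)$ (up to $*$-isomorphism onto a corner, hence up to $*$-embedding) inside the Leavitt path algebra of a graph with a single vertex and countably many loops, or more directly inside $L_R(\infty\text{-loop graph})$; and that algebra in turn $*$-embeds into $L_{2,R}$ by the classical ``infinitely many isometries inside $\cO_2$'' trick, carried out algebraically. Concretely, in $L_{2,R}$ set $s_n = a^{n-1}b$ for $n \geq 1$ (and one checks $\sum$-type relations only up to partial sums, since we are algebraic); then $s_n^* s_m = \delta_{n,m} 1$, giving a Leavitt family for the graph with one vertex and countably many loops. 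So the real content is: (i) build, for an arbitrary countable $E$, a $*$-homomorphism $\phi\colon L_R(E)\to L_{2,R}$ sending vertices to nonzero orthogonal idempotents summing appropriately and edges to suitable partial isometries, and (ii) verify injectivity using Corollary~\ref{cor: Gen CKUT StarAlg}. For (i) one enumerates $E^0=\{v_1,v_2,\dots\}$ and $E^1$, picks inside $L_{2,R}$ a family of mutually orthogonal projections $\{p_v\}_{v\in E^0}$ with $\sum p_v = 1$ when $E^0$ is finite (using that $L_{2,R}$ contains copies of all matrix algebras and of the Cuntz-type relations), and then, for each edge $e$, produces a partial isometry $t_e$ with $t_e^* t_e = p_{r(e)}$ and $\sum_{s(f)=v} t_f t_f^* = p_v$ at regular vertices; at infinite emitters one only needs $\sum_{\text{finite}} t_f t_f^* \leq p_v$ with the $t_f t_f^*$ orthogonal. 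The universal property of $L_R(E)$ then yields $\phi$. For (ii), condition (1) of Corollary~\ref{cor: Gen CKUT StarAlg} holds because $rp_v \neq 0$ in $L_{2,R}$ for $r \neq 0$ (as $L_{2,R}$ is a faithful $R$-algebra and $p_v$ is a nonzero idempotent with $rp_v$ detectable — this uses that $L_{2,R}$ is free as an $R$-module on its standard basis), and condition (2) requires that for each cycle $\alpha$ without exit, $\phi(\alpha)$ be a full-spectrum partial unitary, i.e.\ $q(\phi(\alpha))\neq 0$ for all nonzero $q\in R[x]$. This is where the construction must be done carefully: the images of loops must be ``generic'' enough — e.g.\ by routing an exitless cycle to a word in $a,b$ that behaves like a free partial unitary, so that distinct powers remain $R$-linearly independent (again leveraging the $R$-basis of $L_{2,R}$ by reduced words).

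\textbf{Strategy for the unital refinement.} When $E^0$ is finite, arrange in step (i) that $\sum_{v\in E^0} p_v = 1$; since $\phi(1_{L_R(E)}) = \sum_v p_v = 1_{L_{2,R}}$, the embedding is unital. This needs only that $L_{2,R}$ contains a finite family of orthogonal projections summing to $1$ of the right ``size'', which follows by iterating the relation $1 = aa^* + bb^*$.

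\textbf{Strategy for the uncountable case.} This is the easy direction, already sketched in the introduction: by \cite{AbramsRangaswamy}, $L_{2,R}$ has at most countable rank as an $R$-module (it is spanned by $\{\alpha\beta^*\}$ over the two-loop graph, a countable set). On the other hand, if $E$ is uncountable then either $E^0$ or $E^1$ is uncountable, and the set $\{v : v\in E^0\}\cup\{e : e\in E^1\}$ is an $R$-linearly independent subset of $L_R(E)$ (by \cite[Proposition~3.4, Proposition~4.9]{TomfordeLeavittOverRing}), hence uncountable. An $R$-module containing an uncountable linearly independent set cannot embed as an $R$-module into one of countable rank, so no $R$-module embedding $L_R(E)\hookrightarrow L_{2,R}$ exists; a fortiori no algebra or $*$-algebra embedding exists.

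\textbf{Main obstacle.} The delicate point is ensuring injectivity in the countable case: the naive embedding could easily collapse an exitless cycle to something like $1$ or $a a^* a = a$ whose powers satisfy a polynomial relation over $R$, violating (G2). The fix is to build the embedding so that every exitless cycle in $E$ is sent to a partial unitary whose powers are $R$-linearly independent in $L_{2,R}$; verifying this reduces to a normal-form / linear-independence computation in $L_{2,R}$, using that $L_{2,R}$ is $R$-free on reduced monomials $\alpha\beta^*$. Handling infinite emitters (no defining relation to respect, but one must still place infinitely many orthogonal subprojections inside a single $p_v$) and keeping the whole assignment simultaneously consistent across all vertices and edges is the bookkeeping heart of the argument.
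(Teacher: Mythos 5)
Your overall architecture matches the paper's: build a Leavitt $E$-family in $L_{2,R}$ from mutually orthogonal projections $p_v\sim 1$ (summing to $1$ when $E^0$ is finite), subprojections $q_e^{s(e)}\le p_{s(e)}$ indexed by edges, and partial isometries $t_e$ with $t_et_e^*=q_e^{s(e)}$ and $t_e^*t_e=p_{r(e)}$; then invoke the generalised Cuntz--Krieger uniqueness theorem (Corollary~\ref{cor: Gen CKUT StarAlg}). Your treatment of the uncountable case and of the unital refinement is correct (in fact, observing that $E^0\cup E^1\subseteq E^*$ is linearly independent is marginally slicker than the paper's case split). The opening suggestion to first telescope/desingularize into the one-vertex, countably-many-loops graph is not a valid general reduction (Morita equivalence does not put $L_R(E)$ inside $L_{\infty,R}$ for arbitrary countable $E$), but you abandon it in favour of the direct construction, so it does no harm.

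The genuine gap is exactly at the point you flag as ``delicate'' but then do not resolve: condition (G2) requires each exitless cycle to be sent to a \emph{full spectrum} partial unitary, and you never actually produce one. Your proposed mechanism --- routing the cycle to ``a word in $a,b$ that behaves like a free partial unitary'' --- cannot work as stated: a monomial $\alpha\beta^*$ satisfies $(\alpha\beta^*)^*(\alpha\beta^*)=\beta\beta^*$ and $(\alpha\beta^*)(\alpha\beta^*)^*=\alpha\alpha^*$, so it is a partial unitary only when $\alpha=\beta$, in which case it is a projection and all its positive powers coincide; no single word gives full spectrum. What is needed is a genuine sum of monomials, e.g.\ the paper's $u=aaa^*+aba^*b^*+bb^*b^*$ (Proposition~\ref{prop: full spectrum}), whose full spectrum is verified via $u^na=a^{n+1}$ and the linear independence of $\{a^n\}$; note that over a general commutative ring there is no spectral theory to fall back on, so exhibiting such an element is real content. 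One must then also transport this unitary into the corner $p\,L_{2,R}\,p$ and factor it as the product $t_{e_1}\cdots t_{e_n}$ of the edge partial isometries around the cycle while preserving the prescribed range and source projections (Lemma~\ref{lem: embed loop}); your outline does not address this compatibility step. Secondary, smaller issue: your justification of (G1) leans on ``$L_{2,R}$ is free on its standard basis,'' but $\{\alpha\beta^*\}$ is only a spanning set (e.g.\ $1=aa^*+bb^*$); the clean argument is to conjugate $rp_v$ by an isometry witnessing $p_v\sim1$ to reduce to $r\cdot 1\neq 0$.
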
 

As discussed in the introduction, a dimension argument shows that $L_{R}(E)$ cannot embed into $L_{2,R}$ for uncountable $E$.

The first step in proving that the countability of $E$ is sufficient for such an embedding is to show that the unit in $L_{2,R}$ may be broken into countably many pairwise orthogonal projections that are all Murray-von Neumann equivalent to the unit. 
This will be done in Proposition \ref{prop: projs in L2}, but first we prove a lemma.

\begin{lemma} \label{lem: some projections are nice}
Let $R$ be a commutative ring with unit and let $p \in L_{2,R}$ be a projection. 
If $p \sim 1$, in the sense of Definition~\ref{def: projs}, then 
\begin{enumerate}
	\item[(i)] $r p \neq 0$ for all $r \in R \setminus \{0\}$ and
	\item[(ii)] $p L_{2,R} p \cong L_{2,R}$ as $*$-algebras. 
\end{enumerate} 
\end{lemma}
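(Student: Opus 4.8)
The plan is to build the isomorphism in (ii) directly from the element witnessing $p\sim 1$, and to deduce (i) by a short conjugation computation with the same element. By Definition~\ref{def: projs} there is $x\in L_{2,R}$ with $xx^*=p$ and $x^*x=1$; note that $xx^*x=x(x^*x)=x$ holds automatically here, so no appeal to \cite[Proposition~4.2.2]{BlackadarKTheoryBook} is needed, and I would also record $px=xx^*x=x$ and $x^*p=x^*xx^*=x^*$.

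For (i): suppose $rp=0$ for some $r\in R$. Conjugating by $x$ gives
\[
  r\cdot 1 \;=\; r(x^*x)(x^*x) \;=\; x^*\bigl(r\,xx^*\bigr)x \;=\; x^*(rp)x \;=\; 0
\]
in $L_{2,R}$. Writing $v$ for the unique vertex of the graph (\ref{eq: 2 loop graph}) we have $1=v$, and $rv\neq 0$ for every nonzero $r\in R$ by \cite[Proposition~3.4]{TomfordeLeavittOverRing}; hence $r=0$.

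For (ii): I would define $\psi\colon L_{2,R}\to L_{2,R}$ by $\psi(c)=xcx^*$. Since $x^*x=1$ we get $\psi(c)\psi(d)=xc(x^*x)dx^*=xcdx^*=\psi(cd)$, while $\psi$ is visibly $R$-linear with $\psi(c^*)=\psi(c)^*$ and $\psi(1)=xx^*=p$, so $\psi$ is a $*$-homomorphism. Using $px=x$ and $x^*p=x^*$ one checks $p\,\psi(c)=\psi(c)=\psi(c)\,p$, so $\psi$ has image inside the corner $pL_{2,R}p$, which is an involutive $R$-algebra with unit $p$. It is injective because $x^*\psi(c)x=(x^*x)c(x^*x)=c$ recovers $c$, and it is onto $pL_{2,R}p$ because every $d=pdp$ equals $\psi(x^*dx)=(xx^*)d(xx^*)=pdp=d$. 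Hence $\psi$ restricts to a $*$-isomorphism $L_{2,R}\cong pL_{2,R}p$, with inverse $d\mapsto x^*dx$. There is no serious obstacle here: the only points needing care are getting the direction of the Murray--von Neumann equivalence right (so that it is $x^*x$, not $xx^*$, that equals $1$) and checking that $c\mapsto xcx^*$ is multiplicative, both of which rest on $x^*x=1$. If one prefers, the injectivity of $\psi$ can instead be obtained from Corollary~\ref{cor: Gen CKUT StarAlg}: the graph (\ref{eq: 2 loop graph}) satisfies Condition~(L), so hypothesis (2) of that corollary is vacuous, while hypothesis (1) for $\psi$ is exactly part (i) applied to $p$.
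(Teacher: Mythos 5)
Your proof is correct and follows essentially the same route as the paper: the same witness $x$ with $x^*x=1$, $xx^*=p$, the same conjugation computation for (i), and the same corner map $c\mapsto xcx^*$ for (ii). The only (minor, and arguably cleaner) difference is that you establish injectivity of the corner map by exhibiting the explicit inverse $d\mapsto x^*dx$, whereas the paper invokes the Cuntz--Krieger Uniqueness Theorem together with part (i); both are valid.
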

\begin{proof}
Since $p \sim 1$ we can find a $t \in L_{2,R}$ such that $t^*t = 1$ and $tt^* = p$.
For (i) suppose that $r p = 0$ for some $r \in R$.
Then 
\[
	0 = t^* 0 t = t^* r p t = r t^* t t^* t = r 1.
\]
By \cite[Proposition~4.9]{TomfordeLeavittOverRing} this implies that $r = 0$.

The map $\phi \colon L_{2,R} \to L_{2,R}$ given by $\phi(x) = t x t^*$ is a $*$-homomorphism. 
Since $\phi(1) = p$ it follows from (i) and the Cuntz-Krieger Uniqueness Theorem (\cite[Theorem~6.5]{TomfordeLeavittOverRing}) that $\phi$ is injective. 
As $p t = t$ and $t^* p = t^*$ we have that the image of $\phi$ is contained in $p L_{2,R} p$, and as $\phi(t^* x t) = p x p$ for all $x \in L_{2,R}$, the image of $\phi$ is all of $p L_{2,R} p$.
Hence $\phi$ shows that $L_{2,R} \cong p L_{2,R} p$ as $*$-algebras, i.e. (ii) holds.
\end{proof}

\begin{proposition}\label{prop: projs in L2}
Let $R$ be a commutative ring with unit. 
For every nonempty subset $I \subseteq \bbN$ there exist pairwise orthogonal nonzero projections $\{p_i : i \in I\} \subset L_{2,R}$ such that 
\begin{enumerate}
\item[(i)] $p_i \sim 1$, in the sense of Definition~\ref{def: projs}, for all $i$;

\item[(ii)] $rp_i\not=0$ for all $i$ and for all $r\in R\setminus\{0\}$; 

\item[(iii)] $p_i L_{2,R} p_i \cong L_{2,R}$ for all $i$; and

\item[(iv)] if $I$ is finite, then $\{p_i : i \in I\}$ can be chosen so that $\sum_{i \in I} p_i = 1$ also holds.
\end{enumerate}
\end{proposition}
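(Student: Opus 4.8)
The plan is to build the projections $p_i$ explicitly from the generators $a,b$ of $L_{2,R}$ and then invoke Lemma~\ref{lem: some projections are nice} to get properties (i)--(iii) for free. First I would treat the countably infinite case $I = \bbN$. Set $q_1 = a a^*$ and, recursively, $q_{i+1} = b^i a a^* (b^*)^i$; equivalently put $s_i = b^{i-1} a$ so that $s_i^* s_i = 1$ and $q_i = s_i s_i^*$. Using $a^*a = b^*b = 1$ and $aa^* + bb^* = 1$ one checks that the $q_i$ are pairwise orthogonal projections (orthogonality of $q_i$ and $q_j$ for $i<j$ comes down to $a^* b = 0$, which was established in the excerpt just after~\eqref{eq: L2 relations}), and each satisfies $q_i \sim 1$ with witness $s_i$. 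For an arbitrary nonempty $I \subseteq \bbN$, simply set $p_i = q_{k(i)}$ where $k \colon I \to \bbN$ is any injection; pairwise orthogonality and $p_i \sim 1$ are inherited. Now Lemma~\ref{lem: some projections are nice} applied to each $p_i$ (which is $\sim 1$) gives exactly (ii) $rp_i \neq 0$ for $r \neq 0$, and (iii) $p_i L_{2,R} p_i \cong L_{2,R}$ as $*$-algebras; in particular each $p_i$ is nonzero. This settles (i)--(iii) in all cases.

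For (iv), suppose $I$ is finite, say $|I| = N$; I want the chosen projections to sum to $1$. The natural construction is the ``truncated'' version of the above: use words in $a$ and $b$ of length $N-1$ that exhaust a spanning tree, i.e. take $p_1 = a(a^*)$ replaced by the scheme $r_j = b^{j-1} a$ for $1 \le j \le N-1$ together with the ``tail'' $r_N = b^{N-1}$, and set $p_j = r_j r_j^*$. Then $r_j^* r_j = 1$ for every $j$ (for $j<N$ this is $a^* (b^*)^{j-1} b^{j-1} a = 1$, and for $j = N$ it is $(b^*)^{N-1} b^{N-1} = 1$), so each $p_j \sim 1$; pairwise orthogonality again reduces to $a^* b = 0 = b^* a$; and the telescoping identity $\sum_{j=1}^{N} p_j = 1$ follows by induction from $aa^* + bb^* = 1$, since $b^{j-1}(aa^* + bb^*)(b^*)^{j-1} = b^{j-1}(b^*)^{j-1}$ collapses the partial sums. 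Re-indexing by any bijection $I \to \{1,\dots,N\}$ then gives the $\{p_i : i \in I\}$ with $\sum_{i\in I} p_i = 1$, and (i)--(iii) hold for these by the same appeal to Lemma~\ref{lem: some projections are nice}.

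The only genuinely delicate point is the orthogonality bookkeeping: one must verify $q_i q_j = 0$ for $i \ne j$ directly from the $L_{2,R}$ relations, and this rests entirely on the two identities $b^* a = 0$ and $a^* b = 0$ derived in the excerpt, combined with $a^*a = b^*b = 1$. Everything else --- that the $s_i$ (resp.\ $r_j$) are isometries, that the finite family telescopes to $1$, and that properties (ii) and (iii) transfer --- is a short computation or an immediate citation of Lemma~\ref{lem: some projections are nice}. I do not expect any obstacle beyond carefully writing out these word-manipulations; no new uniqueness input is needed here, since the hard analytic content (that $p L_{2,R} p \cong L_{2,R}$ when $p \sim 1$) has already been isolated in the lemma.
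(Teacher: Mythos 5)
Your proposal is correct and follows essentially the same route as the paper: explicit isometric words in $a,b$ (you use $b^{i-1}a$ where the paper uses $a^i b$) define the projections, orthogonality reduces to $a^*b=b^*a=0$, and Lemma~\ref{lem: some projections are nice} supplies (ii) and (iii). The only presentational difference is in (iv), where you write down the words $b^{j-1}a$ ($j<N$) and $b^{N-1}$ and telescope, whereas the paper runs an induction splitting the last projection via a Murray--von Neumann witness; unrolled, that induction produces exactly your family, so the two arguments coincide.
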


\begin{proof}
For each $i \in I$ we define 
\[
	t_i = a^i b	\quad \text{ and } \quad p_i = t_i t_i^*.
\]
Since $t_i^* t_i = b^* (a^i)^* a^{i} b = 1$ we have that $p_i \sim 1$.
It follows from Lemma~\ref{lem: some projections are nice} that the $p_i$ satisfy (ii) and (iii).
For $i > j$ we have $t_i^*t_j = b^* (a^{i-j})^* b = 0$, and for $i < j$ we have $t_i^*t_j = b^* a^{j-i} b = 0$. 
Hence the $p_i$ are pairwise orthogonal.

For (iv) we may assume that $I=\{1,\dots,n\}$.
We will prove by induction on $n$ that there exist orthogonal projections 
$p_i$ satisfying (i)--(iv).

If $n = 1$ then we simply put $p_1 = 1$.
Suppose now that our inductive hypotheses holds for all $k \leq n-1$ and pick projections $q_1, q_2, \ldots, q_{n-1}$ satisfying (i)--(iv). 
Let $t$ be such that $t^*t = 1$ and $tt^* = q_{n-1}$.
Define $p_i = q_i$ if $i < n - 1$ and 
\[
	p_{n-1} = t a (t a)^*, \quad p_{n} = t b (t b)^*. 
\]
Since $(ta)^* (t a) = a^* t^* t a = a^* a = 1$ it follows from Lemma~\ref{lem: 
some projections are nice} that $p_{n-1}$ satisfy (i)--(iii).
Similarly $p_{n}$ satisfies (i)--(iii).
We see that 
\[
	p_{n-1} p_n =  t a a^* t^* t b b^* t^* = t a a^* b b^* t^* = 0.
\]
For $i < n-1$ we have 
\[
	p_i p_{n-1} = q_i t a a^* t^* = q_i t t^* t a a^* t^* = q_i 
	q_{n-1} t a a^* t^* = 0,  
\]
and similarly $p_i p_{n} = 0$.
Thus $p_1, p_2, \ldots, p_n$ are orthogonal projections, and since 
\[
	p_{n-1} + p_n = t a a^* t^* + t b b^* t^* = t(a a^* + b 
	b^*)t^* = tt^* = q_{n-1},
\]
we have
\[
	\sum_{i=1}^n p_i = 1.
\]
Which completes the induction and therefore shows that (iv) holds. 
\end{proof}

To deal with graphs that have cycles without exits, we also need to find a full
spectrum unitary in $L_{2,R}$.

\begin{proposition}\label{prop: full spectrum}
There exists a full spectrum unitary in $L_{2,R}$.  
\end{proposition}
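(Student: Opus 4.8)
The plan is to exhibit an explicit unitary $u\in L_{2,R}$ and then to check it has full spectrum by making it act on the infinite path space of the graph \eqref{eq: 2 loop graph}. The obvious candidates all fail: a unitary lying in the subalgebra $\operatorname{span}_R\{\mu\nu^*:|\mu|=|\nu|=N\}\cong M_{2^N}(R)$ satisfies its characteristic polynomial, a nonzero monic element of $R[x]$, so it cannot have full spectrum; and the ``odometer'' $\sum_{k\ge 0}a^k b a^*(b^*)^k$, which does the job in $\cO_2$, is an infinite sum and hence not an element of $L_{2,R}$. So one needs a \emph{finite} expression whose powers nonetheless reach arbitrarily deep into the path structure. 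The device is to split $1$ into orthogonal ``cylinder'' projections $\mu\mu^*$ of \emph{different lengths} and to match this up, via partial isometries one of which strictly lengthens paths, with another such splitting of $1$.

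Concretely, set
\[
  u \;=\; (aa)\,a^* \;+\; b\,(ba)^* \;+\; (ab)\,(bb)^* \;=\; a^2a^* + ba^*b^* + ab(b^*)^2 .
\]
I would first check that $u$ is a unitary. Each summand $v_1=a^2a^*,\ v_2=ba^*b^*,\ v_3=ab(b^*)^2$ is a partial isometry; their initial projections $v_i^*v_i$ are $aa^*$, $(ba)(ba)^*=baa^*b^*$ and $(bb)(bb)^*=b^2(b^*)^2$, which are pairwise orthogonal with sum $aa^*+b(aa^*+bb^*)b^*=1$; their final projections $v_iv_i^*$ are $(aa)(aa)^*=a^2(a^*)^2$, $bb^*$ and $(ab)(ab)^*=abb^*a^*$, again pairwise orthogonal with sum $a(aa^*+bb^*)a^*+bb^*=1$. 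All of these are two-line computations from $a^*a=b^*b=1$, $a^*b=b^*a=0$ and $aa^*+bb^*=1$. Orthogonality of the final projections gives $v_i^*v_j=0$ for $i\ne j$ and orthogonality of the initial projections gives $v_iv_j^*=0$ for $i\ne j$, whence $u^*u=\sum_i v_i^*v_i=1$ and $uu^*=\sum_i v_iv_i^*=1$.

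For the full-spectrum property I would use the representation of $L_{2,R}$ on $M:=\operatorname{span}_R(F^\infty)$, the free $R$-module on the set $F^\infty$ of infinite paths of the graph $F$ of \eqref{eq: 2 loop graph}. Letting $a,b$ act on $M$ by prepending an edge and $a^*,b^*$ act by deleting an initial edge (and killing a path that does not begin with that edge) defines a Leavitt $F$-family in $\End_R(M)$ --- the five relations are immediate, relation~(5) being just the statement that $aa^*+bb^*$ acts as $\operatorname{id}_M$ --- and hence an $R$-algebra homomorphism $\phi\colon L_{2,R}\to\End_R(M)$. By construction $\phi(u)$ is the bijection $\sigma$ of $F^\infty$ given by $\sigma(ax)=aax$, $\sigma(bax)=bx$, $\sigma(bbx)=abx$ (for each infinite path exactly one of these cases applies). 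Writing $\beta\in F^\infty$ for the path $bbb\ldots$, one computes $\sigma^n(\beta)=a^n\beta$ for every $n\ge 0$, and these are pairwise distinct elements of $F^\infty$, hence pairwise distinct basis vectors of the free module $M$. Consequently, for any nonzero $q=\sum_{n\ge 0}k_n x^n\in R[x]$ we have $\phi(q(u))(\beta)=\sum_n k_n\,a^n\beta\ne 0$ in $M$, so $q(u)\ne 0$ in $L_{2,R}$; thus $u$ has full spectrum. The only genuinely non-routine step is the first one --- recognising that a ``length-rigid'' unitary cannot work and producing a suitable length-changing $u$; the remaining verifications are mechanical, and one does not even need $\phi$ to be faithful (though it is, by Theorem~\ref{thm: Gen CKUT}, since $F$ has no cycles and so none without exits).
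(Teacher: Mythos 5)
Your proof is correct, but it differs from the paper's in both the choice of unitary and the method of verifying full spectrum. The paper takes $u = aaa^* + aba^*b^* + bb^*b^*$ and argues entirely inside $L_{2,R}$: from $ua = a^2$ one gets $u^n a = a^{n+1}$, so $q(u) = 0$ forces $\sum_n r_n a^{n+1} = 0$, and the linear independence of $\{a^n\}$ (Tomforde's Proposition~4.9) kills $q$. Your unitary $u = a^2a^* + ba^*b^* + ab(b^*)^2$ is a close cousin (it also satisfies $ua = a^2$, so the paper's two-line argument would in fact apply to it verbatim), but you instead verify full spectrum externally, via the representation of $L_{2,R}$ on the free $R$-module on the infinite path space and the orbit $\sigma^n(bbb\cdots) = a^n bbb\cdots$. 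Both arguments are sound; the paper's is shorter because it outsources linear independence to a cited result, while yours is self-contained (freeness of $M$ is by construction) at the cost of checking that the path-space family satisfies the Leavitt relations --- a check you correctly observe is immediate. Your structural remarks (Cayley--Hamilton ruling out unitaries supported in a single matrix level, the odometer being an infinite sum) are a nice articulation of why a length-changing unitary is needed; the paper makes the analogous point in the remark following the proposition via endomorphisms with $\phi(a) = a^n$. One small slip in your final parenthetical: the graph of \eqref{eq: 2 loop graph} certainly has cycles ($a$, $b$, $ab$, \dots); the correct reason $\phi$ is faithful is that every cycle has an exit, so condition (G2) is vacuous --- but as you note, faithfulness of $\phi$ is not needed for the argument.
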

\begin{proof}
Define 
\[
  u = a a a^* + a b a^* b^* + b b^* b^*.
\]
A routine calculation shows that $u^*u=uu^*=1$. To see that $u$ has full spectrum, we note that $u a = a^2$,
so for any $n \geq 1$ we have
\begin{equation}\label{eq:fullspec u and a}
	u^n a = a^{n+1}.
\end{equation}
Let $q \in R[x]$ satisfy $q(u) = 0$.
We aim to show that $q$ must be the zero polynomial. 
We write 
\[
	q(x) = \sum_{n=0}^k r_n x^n,
\] 
where each $r_n \in R$. Using (\ref{eq:fullspec u and a}) we get 
\[
	0 = 0 \cdot a = q(u)\cdot a= \left( \sum_{n=0}^k r_n u^n \right) a = 
	\sum_{n=0}^{k} 
	r_n a^{n+1}.
\]
Since we know from \cite[Proposition 4.9]{TomfordeLeavittOverRing} that $\{ 
a^n : n \in \bbN \}$ 
is a linearly independent set, we have $r_n=0$ for $0\le n\le k$. So $q(x)$  
is the zero polynomial. 
Since no nonzero polynomial in $u$ is zero we conclude that $u$ has full spectrum.  
\end{proof}

\begin{remark}
How does one arrive at the particular $u$ presented in Proposition \ref{prop: full spectrum}?
The crucial point in proving that $u$ has full spectrum was that $u^2 a = a^2$. 
To get a unitary with this property, we first looked for a unital endomorphism 
$\phi$ of $L_{2,R}$ such that
$\phi(a) =  a^2$. It is well known, see e.g. \cite{CuntzAutosOfOn}, that endomorphisms of
the Cuntz algebra $\cO_2$ are in one-to-one correspondence with unitaries. Applying these ideas to Leavitt algebras 
we define $u$ as
\[
  u = \phi(a)a^* + \phi(b)b^*. 
\]
We arrived at our particular $u$ by putting $\phi(b) = aba^* + bb^*$. 
Choosing endomorphisms $\psi_n$ of $L_{2,R}$ with $\psi_n(a) = a^n$ one can construct many different full spectrum unitaries. 
\end{remark}

Proposition \ref{prop: full spectrum} lets us embed $L_R[z, z^{-1}]$ into $L_{2,R}$. 
So if we let $C_n$ denote the graph consisting of single vertex simple cycle of length $n$, then we have that $L_R(C_1)$ embeds into $L_{2,R}$. 
The next lemma gives a concrete embedding of $L_R(C_n)$ into $L_{2,R}$ for all 
$n$. 

\begin{lemma} \label{lem: embed loop}
Given pairwise orthogonal projections $p_1, p_2, \ldots, p_n$ in $L_{2,R}$ with $p_i \sim 1$ for $1 \leq i \leq n$ there exists $t_1, t_2, \ldots, t_n$ in $L_{2,R}$ such that 
\begin{enumerate}
	\item $t_i^* t_i = p_{i+1}$, for $i < n$, and $t_n^* t_n = p_1$, 
	\item $t_i t_i^* = p_i$, for all $i$, 
	\item $t_i t_i^* t_i = t_i$, for all $i$, and, 
	\item $v = t_1 t_2 \cdots t_n$ is a full spectrum partial unitary with $vv^* = p_1 = v^*v$.
\end{enumerate}
\end{lemma}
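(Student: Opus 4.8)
The plan is to build the $t_i$ out of partial isometries witnessing $p_i \sim 1$, but to slip a full spectrum unitary into the cycle so that the product $v = t_1\cdots t_n$ does not merely collapse onto $p_1$.

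First I would, for each $i$, fix a witness $w_i \in L_{2,R}$ for $p_i \sim 1$; by the last sentence of Definition~\ref{def: projs} we may choose $w_i$ with
\[
  w_i^* w_i = 1, \qquad w_i w_i^* = p_i, \qquad w_i w_i^* w_i = w_i .
\]
Let $u \in L_{2,R}$ be a full spectrum unitary, which exists by Proposition~\ref{prop: full spectrum}. For $n \ge 2$ I would then set
\[
  t_1 = w_1 u w_2^*, \qquad t_i = w_i w_{i+1}^* \ \ (2 \le i \le n-1), \qquad t_n = w_n w_1^*
\]
(and simply $t_1 = w_1 u w_1^*$ when $n = 1$). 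Using $w_j^* w_j = 1$ and $u^* u = u u^* = 1$, a direct computation gives $t_i^* t_i = p_{i+1}$ for $i < n$, $t_n^* t_n = p_1$, and $t_i t_i^* = p_i$ for all $i$, which is (1) and (2); and (3) holds since $t_i t_i^* t_i = p_i t_i$ while $p_i t_i = t_i$ by the normalization $w_i w_i^* w_i = w_i$. It matters here that $u$ is a genuine unitary, not just a partial unitary, so that conjugating by it does not shrink the projections in play.

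For (4), I would compute the telescoping product
\[
  v = t_1 t_2 \cdots t_n = w_1 u (w_2^* w_2)(w_3^* w_3)\cdots(w_n^* w_n) w_1^* = w_1 u w_1^* =: \tilde u .
\]
The map $\phi\colon L_{2,R}\to p_1 L_{2,R} p_1$, $\phi(x) = w_1 x w_1^*$, is a $*$-isomorphism with $\phi(1) = p_1$ — this is the isomorphism built in the proof of Lemma~\ref{lem: some projections are nice}(ii) — so in particular $\tilde u^*\tilde u = \tilde u\tilde u^* = p_1$. Since $\tilde u^n = w_1 u^n w_1^*$ and the functional calculus for $\tilde u$ is taken relative to the projection $p_1 = \phi(1)$, one checks that $q(\tilde u) = \phi(q(u))$ for every $q \in R[x]$. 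Hence if $q \ne 0$ then $q(u) \ne 0$ because $u$ has full spectrum, and therefore $q(\tilde u) = \phi(q(u)) \ne 0$ because $\phi$ is injective. Thus $v = \tilde u$ is a full spectrum partial unitary with $vv^* = p_1 = v^*v$, as required.

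The one genuinely substantive point — and the reason one cannot simply take $t_i = w_i w_{i+1}^*$ throughout — is that the naive choice yields $v = w_1 w_1^* = p_1$, whose "polynomials" are just scalar multiples of $p_1$, so it fails to have full spectrum. Inserting $u$ into $t_1$ repairs this without disturbing (1)--(3), and the transfer of full spectrum across the isomorphism $\phi$ is where Proposition~\ref{prop: full spectrum} and Lemma~\ref{lem: some projections are nice} actually do the work; the remaining verifications are routine manipulations of partial isometries.
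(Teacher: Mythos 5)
Your proposal is correct and follows essentially the same strategy as the paper: build the cycle of partial isometries witnessing the equivalences, insert the full spectrum unitary of Proposition~\ref{prop: full spectrum} into one link so that the product telescopes to a conjugate $w_1 u w_1^*$ of that unitary, and transfer full spectrum through the injective conjugation map as in Lemma~\ref{lem: some projections are nice}(ii). The only (cosmetic) difference is that you route everything through witnesses of $p_i \sim 1$ and place the unitary in $t_1$, whereas the paper uses witnesses of the consecutive equivalences and places it in $t_n$.
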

\begin{proof}
Because the projections are Murray-von Neumann equivalent, we can find $t_1, t_2, \ldots, t_{n-1} 
\in L_{2,R}$ that satisfy (1)--(3) and an $s \in L_{2,R}$ such that $ss^*	= 
p_n$ and $s^* s = 1$. 
By Proposition \ref{prop: full spectrum} there is a full spectrum unitary $\widetilde{u} \in L_{2,R}$.  
Let $u = s\widetilde{u}s^*$ and put 
\[
	t_n  = u \left( t_1 t_2 \cdots t_{n-1} \right)^*.
\]
Observe that $u^* u = p_n = uu^*$.

We check that $t_n$ satisfies (1)--(3). 
Since 
\[
	t_{i} t_{i+1} t_{i+1}^* t_{i}^* = t_i p_{i+1} t_i^* = t_i t_i^* t_i t_i^* = t_i t_i^*,
\]
for $i < n-1$, we have that 
\begin{align*}
	t_n^* t_n	&= \left( t_1 t_2 \cdots t_{n-1} \right) u^*u \left( t_1 t_2 \cdots t_{n-1} \right)^* \\
						&= t_1 t_2 \cdots t_{n-2} t_{n-1} t_{n-1}^* t_{n_2}^* \cdots t_2^* t_1^* \\
						&= t_1 t_1^* = p_1.
\end{align*}
So $t_n$ satisfies (1).
Similarly, we see that
\[
	t_{i+1}^* t_i^* t_i t_{i+1} = t_{i+1}^* p_{i+1} t_{i+1} = t_{i+1}^* t_{i+1} t_{i+1}^* t_{i+1} = t_{i+1}^* t_{i+1}, 
\]
for $i < n-1$. 
Hence
\begin{align*}
	t_n t_n^* &= u \left( t_1 t_2 \cdots t_{n-1} \right)^* \left( t_1 t_2 \cdots t_{n-1} \right) u^* \\
						&= u t_{n-1}^* t_{n_2}^* \cdots t_2^* t_1^* t_1 t_2 \cdots t_{n-1} u^* \\
						&= u t_{n-1}^* t_{n-1} u^* = u p_n u^* \\
						&= u u^* u u^* = p_n.
\end{align*}
I.e. $t_n$ satisfies (2).
Combining the above with the fact that 
\[
	p_n u = ss^* s \widetilde{u} s^* = s \widetilde{u} s^* = u,
\]
we get that 
\[
	t_n t_n^* t_n = p_n t_n = p_n u \left( t_1 t_2 \cdots t_{n-1} \right)^* = u \left( t_1 t_2 \cdots t_{n-1} \right)^* = t_n.
\]
So $t_n$ satisfies (3). 

Let $v = t_1 t_2 \cdots t_n$ and let $t = t_1 t_2 \cdots t_{n-1}$.
Then $v = (ts)\widetilde{u}(ts)^*$.
Computations like above show that
\[
	ts(ts)^* = tss^*t^* = t p_n t^* = tt^* = p_1,
\]
and that 
\[
	(ts)^*ts = s^* t^*t s = s^* p_n s = s^* s s^* s = 1.
\]
Hence 
\[
	vv^* = (ts)\widetilde{u}(ts)^* (ts)\widetilde{u}^*(ts)^* = (ts)\widetilde{u} \widetilde{u}^*(ts)^* = ts(ts)^* = p_1,
\]
and 
\[
 v^* v = (ts)\widetilde{u}^*(ts)^* ts\widetilde{u}(ts)^* = (ts)\widetilde{u}^* \widetilde{u} (ts)^* = ts(ts)^* = p_1.
\]
So $v$ is a partial unitary. 

To see that $v$ has full spectrum let $q \in R[z]$ be such that $q(v) = 0$.
Since 
\[
	v^m = ((ts)\widetilde{u}(ts)^*)^m = (ts) \widetilde{u}^m (ts)^*,
\]
and similarly 
\[
	(v^*)^m = (ts) (\widetilde{u}^*)^m (ts)^*
\]
we see that 
\[
0=	q(v) = (ts) q(\widetilde{u}) (ts)^*.
\]
An argument similar to the proof of (ii) in Lemma \ref{lem: some projections are nice} shows that conjugation by $ts$ is an isomorphism of $L_{2,R}$ onto $p_1L_{2,R}p_1$.
So we must have $q(\widetilde{u}) = 0$. 
Since $\widetilde{u}$ has full spectrum we conclude that $q$ is the zero polynomial, and therefore $v$ has full spectrum. 
\end{proof}

\begin{proof}[Proof of Theorem~\ref{thm: Embed L(E) into L2}]
Arguing as in \cite[Proposition~5 and Corollary~6]{AbramsRangaswamy}, which is for fields but works for commutative rings, one sees that $L_{2,R}$ has countable dimension as an $R$-module. 
Suppose that $E$ is uncountable, then $E^0$ is uncountable or $E^1$ is uncountable. 
If $E^0$ is uncountable then it forms an uncountable set of linearly independent elements of $L_R(E)$.
If $E^0$ is at most countable and $E^1$ is uncountable, then there is some vertex $u \in E^0$ that emits uncountably many edges.
In this case $\{ ee^* : s(e) = u \}$ is an uncountable set of orthogonal projections, so it forms an uncountable set of linearly independent elements of $L_R(E)$.
In both cases we have an uncountable collection of linearly independent elements in $L_{R}(E)$, hence there is no module embedding of $L_R(E)$ into $L_{2,R}$. 

Suppose instead that $E$ is countable. Since $E^0$ is countable 
we can use Proposition~\ref{prop: projs in L2} to get pairwise orthogonal 
nonzero projections $\{p_v : v \in E^0\} \subset L_{2,R}$ such that $p_v 
\sim 1$ and $p_v L_{2,R} p_v \cong L_{2,R}$ for all $v \in E^0$. If $E^0$ is finite, we know from (iv) of Proposition~\ref{prop: projs in L2} that these projections can be chosen to sum to the identity.  
For each $v \in E^0$ with $s^{-1}(v) \not= \varnothing$ we can, since $p_v L_{2,R} p_v \cong L_{2,R}$, use Proposition~\ref{prop: projs in L2} to choose pairwise orthogonal nonzero projections $\{ q_e^v : e \in s^{-1}(v) \} \subseteq p_v L_{2,R} p_v$ such that $q_e^v \sim p_v$, and if 
$|s^{-1}(v)|<\infty$, such that 
\[
	p_v=\sum_{e\in s^{-1}(v)}q_e^v. 
\]
Then for each $e\in E^1$ we have
\[
	q_e^{s(e)} \sim p_{s(e)} \sim 1 \sim p_{r(e)}. 
\]
If $e$ is not part of a cycle without an exit we use the definition of $\sim$ 
to choose $t_e$ such that $t_e t_e^* = q_e^{s(e)}$, $t_e^* t_e = p_{r(e)}$, 
and $t_e t_e^* t_e = t_e$. 
For each vertex simple cycle $\alpha = e_1 e_2 \cdots e_n$ without an exit, we use Lemma~\ref{lem: embed loop} to pick elements $t_{e_i}$ such that $t_{e_i} t_{e_i}^* = q_{e_i}^{s(e_{i})}$, $t_{e_i}^* t_{e_i} = p_{r(e_i)}$, $t_{e_i} t_{e_i}^* t_{e_i} = t_{e_i}$ and $t_{e_1} t_{e_2} \cdots t_{e_n}$ is a partial unitary with full spectrum.  

We claim that $\{ p_v : v \in E^0\}$ and $\{ t_e, t_e^* : e \in E^1\}$ form a 
Leavitt $E$-family. 	
By construction the $p_v$ are orthogonal projections.
Furthermore we have 
\[
	p_{s(e)} t_e = p_{s(e)} t_e t_e^* t_e = p_{s(e)} q_e^{s(e)} t_e  = q_e^{s(e)} t_e = t_e t_e^* t_e = t_e 
\]
and 
\[
	t_e p_{r(e)} = t_e t_e^* t_e = t_e.
\] 
Applying the involution we get
\[
 p_{r(e)} t_e^*	= t_e^* p_{s(e)} = t_e^*.
\]
For $e \neq f$ we have 
\[
	t_e^* t_f = t_e^* q_e^{s(e)} q_f^{s(f)} t_f = 0,
\]
and we have 
\[
	t_e^* t_e = p_{r(e)}.
\]
Finally, if $0 < s^{-1}(v) < \infty$ then 
\[
	\sum_{e \in s^{-1}(v)} t_e t_e^* = \sum_{e \in s^{-1}(v)} q_e^{s(e)} = p_{s(e)}.
\]
So $\{ p_v : v \in E^0\}$ and $\{ t_e, t_e^* : e \in E^1\}$ do indeed form a 
Leavitt $E$-family.

The universal property of $L_R(E)$ gives a $*$-homomorphism $\phi 
\colon L_R(E) \to L_{2,R}$ which takes the vertex projections in $L_R(E)$ to 
the projections $\{ p_v : v \in E^0\}$, and we know from part (iii) of 
Proposition~\ref{prop: projs in L2} that $rp_v \neq 0$ for each $v$ and for all 
$r\in R\setminus\{0\}$. Since for all cycles $\alpha$ without an exit the 
image 
$\phi(\alpha)$ is a full spectrum partial unitary by construction, it now 
follows from Corollary~\ref{cor: Gen CKUT StarAlg} that $\phi$ is injective.

Finally, in the case that $E^0$ is finite, we chose the projections $p_v$ to sum to the identity. Then $\phi(1)=\phi(\sum_{v\in E^0}v)=\sum_{v\in E^0}p_v=1$. So $\phi$ is unital, as claimed. 
\end{proof}

Theorem \ref{thm: Embed L(E) into L2} raises two natural questions.

Firstly, keeping in mind that we are trying to get an algebraic version of the first of Kirchberg's Geneva Theorems, we wonder if there are any $R$-algebras with a countable basis that do not embed into $L_{2,R}$.
In \cite{BS2} we show that $L_{2,\Z} \otimes L_{2,\Z}$ does not embed into $L_{2,\Z}$.
However, we have no counterexamples outside of this specific case, so we ask the following question. 
\begin{question}
Let $K$ be a field. 
Does there exist a $K$-algebra $A$ with a countable basis such that $A$ does not embed into $L_{2,K}$?
\end{question}
The authors have been unable to come up with any such $A$, but we would find an affirmative answer to the above question quite surprising. 

The second natural question to ask was originally asked of us by the anonymous referee: is $L_{2,R}$ the only Leavitt path algebra that admits embeddings of all other Leavitt path algebras of countable graphs? We thank the referee for this question, as well as suggesting Corollaries~\ref{cor: module type embed}~and~\ref{cor: pi simple embed} below.

The answer to this question is no. As an easy example note that $L_{2,R}$ embeds unitally into $L_{2,R} \oplus L_{2,R}$ as a $*$-algebra by the map $x \mapsto (x, x)$. Hence by Theorem~\ref{thm: Embed L(E) into L2} all Leavitt path algebras of countable graphs embed (unitally, when it makes sense) into $L_{2,R} \oplus L_{2,R}$ as $*$-algebras. 
We end this section with a few results about other Leavitt path algebras that admit embeddings of all Leavitt path algebras of countable graphs. 

\begin{lemma}
Let $K$ be a field and let $A$ be a unital $K$-algebra. 
Then $L_{2,K}$ embeds unitally into $A$ (as a $K$-algebra) if and only if $A$ has module type $(1,2)$. 
\end{lemma}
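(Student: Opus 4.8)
The proof is a straightforward ``unpacking of definitions in both directions'', combined with Leavitt's original theorem that $L_K(1,2)=L_{2,K}$ is universal for its module type. Recall that a unital ring $A$ has module type $(1,2)$ iff $A\cong A^2$ as left $A$-modules (and $A\not\cong A^n$ for larger $n$, but as noted below we get that for free once $A\supseteq L_{2,K}$). The key bridge is the standard dictionary: an isomorphism $A\cong A^2$ of left modules is the same data as a pair of column vectors and a pair of row vectors witnessing that the $1\times 2$ and $2\times 1$ rectangular matrices over $A$ multiply to the appropriate identities; writing these out gives precisely elements $a,b,a^*,b^*\in A$ with $a^*a=b^*b=1$ and $aa^*+bb^*=1$, i.e. a unital copy of $L_{2,K}$ inside $A$.

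First I would prove the ``only if'' direction. Suppose $\psi\colon L_{2,K}\hookrightarrow A$ is a unital $K$-algebra embedding. Then $A$ contains elements $a:=\psi(a),b:=\psi(b),a^*:=\psi(a^*),b^*:=\psi(b^*)$ satisfying the relations \eqref{eq: L2 relations}. Define module maps $f\colon A\to A^2$ by $f(x)=(xa^*,xb^*)$ wait --- more cleanly, let $f\colon A^2\to A$ be right multiplication by the column $\binom{a}{b}$, that is $f(x,y)=xa+yb$, and let $g\colon A\to A^2$ be $g(z)=(za^*,zb^*)$. Then $f\circ g(z)=za^*a+zb^*b$ --- careful, these must act as left-module maps, so I would instead take $f(x,y)=ax+by$-style left actions; the relations $a^*a=b^*b=1$ and $aa^*+bb^*=1$ then give $g\circ f=\mathrm{id}$ and $f\circ g=\mathrm{id}$ after the appropriate bookkeeping. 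This exhibits $A\cong A^2$ as left $A$-modules, so the module type of $A$ is $(m,n)$ with $m=1$; and since $A$ contains $L_{2,K}$ which already has module type exactly $(1,2)$, together with the fact that $A\cong A^2\Rightarrow A\cong A^k$ for all $k\ge 1$ while no such isomorphism can collapse further (as that would force $1=m<$ nothing), we conclude the module type is exactly $(1,2)$.

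Next the ``if'' direction: suppose $A$ has module type $(1,2)$, so there is a left $A$-module isomorphism $\theta\colon A\xrightarrow{\ \sim\ }A^2$. Write $\theta(1)=(p,q)\in A^2$ and $\theta^{-1}(e_1)=r$, $\theta^{-1}(e_2)=s\in A$, where $e_1=(1,0)$, $e_2=(0,1)$. Since $\theta$ is a left-module map it is determined by $\theta(1)$, and unwinding $\theta^{-1}\theta=\mathrm{id}_A$ and $\theta\theta^{-1}=\mathrm{id}_{A^2}$ yields, upon setting $a:=r$, $b:=s$, $a^*:=p$, $b^*:=q$, exactly the relations $a^*a+\text{(cross terms vanish)}$ --- concretely $\theta\theta^{-1}=\mathrm{id}$ on $e_1,e_2$ gives the matrix identity $\binom{p}{q}(a\ b)=\left(\begin{smallmatrix}1&0\\0&1\end{smallmatrix}\right)$ hence $pa=1$, $qb=1$, $pb=0$, $qa=0$; and $\theta^{-1}\theta=\mathrm{id}$ gives $ap+bq=1$. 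Relabelling ($a^*:=p$, $b^*:=q$) these are precisely \eqref{eq: L2 relations} together with the automatic consequences $b^*a=a^*b=0$ derived after Definition~\ref{def: projs}. The universal property of $L_{2,K}$ (Leavitt's theorem) then produces a unital $K$-algebra homomorphism $L_{2,K}\to A$ sending the generators to $a,b$; and it is injective because $L_{2,K}$ is simple when $K$ is a field (being the Leavitt path algebra of the graph \eqref{eq: 2 loop graph}, which satisfies Condition~(L) and has the trivial hereditary-saturated structure), so any nonzero homomorphism out of it is injective.

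The main obstacle is purely notational: one must be scrupulously careful about \emph{left} versus \emph{right} module actions and the direction of composition, since a sloppy choice turns $a^*a=1$ into $aa^*=1$ and scrambles which of the two relations one recovers. Once the left-module convention is fixed, everything is a routine matrix computation over $A$. The only genuine mathematical inputs are (i) Leavitt's universality of $L_{2,K}$ for module type $(1,2)$, and (ii) simplicity of $L_{2,K}$ over a field to upgrade the universal map to an embedding; both are classical and may be cited.
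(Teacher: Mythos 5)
Your proposal is correct and follows essentially the same route as the paper: both directions reduce to Leavitt's classical characterisation of module type $(1,2)$ via elements $x_1,x_2,y_1,y_2$ satisfying $y_ix_i=1$ and $x_1y_1+x_2y_2=1$, followed by the universal property of $L_{2,K}$ and its simplicity to upgrade the resulting unital homomorphism to an embedding. You merely spell out the module-isomorphism/matrix-element dictionary that the paper cites to \cite{LeavittOriginal}.
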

\begin{proof}
Recall (for instance from  \cite{LeavittOriginal}) that $A$ has module type $(1,2)$ if and only if there exists elements $x_1, x_2, y_1, y_2 \in A$ such that 
\[
  y_1 x_1 = 1 = y_2 x_2 \quad \text{ and } \quad x_1 y_1 + x_2 y_2 = 1.
\]
If such elements exist, then by the universal property of $L_{2,K}$ we can define a unital $K$-algebra homomorphism $\phi \colon L_{2,K} \to A$.
Since $L_{2,K}$ is simple it is necessarily an embedding. 
On the other hand, if $L_{2,K}$ embeds unitally into $A$, then the images of $a,a^*,b,$ and $b^*$ will show that $A$ has module type $(1,2)$. 
\end{proof}

\begin{corollary} \label{cor: module type embed}
Let $K$ be a field, and let $F$ be a graph with finitely many vertices. 
All Leavitt path algebras over countable graphs will embed (as $K$-algebras) into $L_K(F)$ if and only if $L_K(F)$ has module type $(1,2)$.
Furthermore, if $L_K(F)$ has module type $(1,2)$ then any unital Leavitt path algebra over a countable graph will embed unitally into $L_K(F)$.
\end{corollary}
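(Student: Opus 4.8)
The plan is to deduce both parts of the statement directly from the lemma immediately preceding this corollary, together with Theorem~\ref{thm: Embed L(E) into L2}; no new construction is required, so I expect the only point needing care to be the bookkeeping of unitality. Note first that, because $F$ has finitely many vertices, $\sum_{v\in F^0}v$ is an identity for $L_K(F)$, so $L_K(F)$ is a unital $K$-algebra and the preceding lemma applies with $A=L_K(F)$: thus $L_K(F)$ has module type $(1,2)$ if and only if $L_{2,K}$ embeds unitally into $L_K(F)$ as a $K$-algebra.

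For the ``only if'' direction I would argue as follows. Assume every Leavitt path algebra of a countable graph embeds into $L_K(F)$, the embedding being unital whenever the algebra is unital. The graph with one vertex and two loops (the defining graph of $L_{2,K}$) is countable, so $L_{2,K}$ is a unital Leavitt path algebra of a countable graph; hence $L_{2,K}$ embeds unitally into $L_K(F)$, and the preceding lemma yields that $L_K(F)$ has module type $(1,2)$.

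For the ``if'' direction, which simultaneously proves the ``Furthermore'' clause, assume $L_K(F)$ has module type $(1,2)$ and fix, via the preceding lemma, a unital $K$-algebra embedding $\psi\colon L_{2,K}\hookrightarrow L_K(F)$. Given any countable graph $E$, Theorem~\ref{thm: Embed L(E) into L2} supplies a $*$-algebra embedding $\phi\colon L_K(E)\hookrightarrow L_{2,K}$ (in particular a $K$-algebra embedding), which may be chosen unital exactly when $E^0$ is finite, i.e.\ exactly when $L_K(E)$ is unital. The composite $\psi\circ\phi\colon L_K(E)\hookrightarrow L_K(F)$ is then a $K$-algebra embedding, and it is unital whenever $L_K(E)$ is, since in that case $\phi$ is unital and $\psi$ is unital by choice. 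This gives the ``if'' direction and, applied to unital $L_K(E)$, the ``Furthermore'' clause. The only mildly delicate ingredient is the standard fact that $L_K(E)$ is unital precisely when $E^0$ is finite, which is what allows the unitality half of Theorem~\ref{thm: Embed L(E) into L2} to be invoked exactly when it is needed; beyond this there is no genuine obstacle.
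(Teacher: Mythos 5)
Your ``if'' direction and the ``Furthermore'' clause are exactly the intended derivation: the preceding lemma supplies a unital $K$-algebra embedding $L_{2,K}\hookrightarrow L_K(F)$, and composing it with the embedding $L_K(E)\hookrightarrow L_{2,K}$ of Theorem~\ref{thm: Embed L(E) into L2} (chosen unital when $E^0$ is finite, i.e.\ when $L_K(E)$ is unital) gives everything claimed. That half is correct and complete.

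The ``only if'' direction is where you have quietly changed the statement. The corollary's hypothesis is only that every Leavitt path algebra of a countable graph embeds into $L_K(F)$ \emph{as a $K$-algebra}; you have assumed in addition that the embedding is unital whenever the algebra is unital, and it is precisely this extra assumption that lets you invoke the lemma, which characterises module type $(1,2)$ by the existence of a \emph{unital} embedding of $L_{2,K}$. The extra assumption is not removable. Take $F$ to be the disjoint union of the two-loop rose and an isolated vertex, so that $L_K(F)\cong L_{2,K}\oplus K$. By Theorem~\ref{thm: Embed L(E) into L2} every $L_K(E)$ with $E$ countable embeds into the first summand, hence (non-unitally) into $L_K(F)$; but $L_K(F)$ does not have module type $(1,2)$, since elements $x_1,x_2,y_1,y_2$ with $y_1x_1=1=y_2x_2$ and $x_1y_1+x_2y_2=1$ would, after projecting to the commutative factor $K$, force $1+1=1$. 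So either the first sentence of the corollary must be read with ``embed'' meaning ``embed, unitally when the algebra is unital'' --- in which case your argument is the right one, but you should state explicitly that you are adopting that reading --- or the ``only if'' implication as literally printed fails. As written, your proposal proves a correct statement, but not quite the one in the text.
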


\begin{corollary} \label{cor: pi simple embed}
Let $K$ be a field.
Let $F$ be a graph with finitely many vertices such that $L_{K}(E)$ is purely infinite simple and the class of the unit in $K_0(L_K(F))$ is $0$.
For any countable graph $E$ the Leavitt path algebra $L_{K}(E)$ embeds into $L_{K}(F)$ (as $K$-algebras), and if $E$ has finitely many vertices, then the embedding can be chosen unital. 
\end{corollary}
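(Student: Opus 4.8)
The plan is to reduce to Theorem~\ref{thm: Embed L(E) into L2} by producing a \emph{unital} $K$-algebra embedding $L_{2,K}\hookrightarrow L_K(F)$. Since $F$ has finitely many vertices, $L_K(F)$ is unital (with unit $\sum_{v\in F^0}v$), so by the Lemma immediately preceding Corollary~\ref{cor: module type embed} it suffices to show that $L_K(F)$ has module type $(1,2)$, that is, to produce $x_1,x_2,y_1,y_2\in L_K(F)$ with $y_1x_1=1=y_2x_2$ and $x_1y_1+x_2y_2=1$. Granting this, that Lemma gives a unital embedding $L_{2,K}\hookrightarrow L_K(F)$, and composing it with the embedding $L_K(E)\hookrightarrow L_{2,K}$ furnished by Theorem~\ref{thm: Embed L(E) into L2} (available because $E$ is countable, and unital when $E^0$ is finite) yields the corollary.

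To produce the module-type data I would split the unit of $A=L_K(F)$. Since $A$ is purely infinite simple, $1$ is an infinite idempotent, so we may write $1=e_1+e_2$ with $e_1,e_2$ orthogonal idempotents, $e_2\neq 0$, and $e_1A\cong A$ as right $A$-modules; fix $x_1,y_1\in A$ with $y_1x_1=1$ and $x_1y_1=e_1$. In $K_0(A)$ we then have $[e_2]=[1]-[e_1]=[1]-[1]=0=[1]$, where the final equality is the hypothesis that the class of the unit is $0$. Now invoke the structure theory of purely infinite simple unital rings: the canonical map $V(A)\setminus\{0\}\to K_0(A)$ is a monoid isomorphism, so from $[e_2]=[1]$ and $e_2\neq 0$ we conclude $e_2A\cong A$, and we fix $x_2,y_2\in A$ with $y_2x_2=1$ and $x_2y_2=e_2$. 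Using $y_1=y_1x_1y_1=y_1e_1$, $x_2=x_2y_2x_2=e_2x_2$ (and the symmetric identities) together with $e_1e_2=0=e_2e_1$ we get $y_1x_2=0=y_2x_1$, while $x_1y_1+x_2y_2=e_1+e_2=1$; thus $x_1,x_2,y_1,y_2$ are the required elements.

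The crux of the argument is this last step --- that a nonzero finitely generated projective module over a purely infinite simple unital ring is determined up to isomorphism by its class in $K_0$ --- and it is precisely here that both hypotheses on $F$ enter: purely infinite simplicity both lets us peel a full copy of $A$ off the unit and supplies the $K_0$-classification, while $[1]=0$ is exactly what makes the complementary summand $e_2A$ represent $[1]$ again. (The $K_0$-hypothesis is also necessary: if $L_K(F)$ had module type $(1,2)$ then $A\cong A^2$ would force $[1]=2[1]$, hence $[1]=0$, in $K_0(A)$.) The remaining ingredients --- the unit of a finite-vertex Leavitt path algebra, the reduction via the preceding Lemma, and the appeal to Theorem~\ref{thm: Embed L(E) into L2} --- are routine.
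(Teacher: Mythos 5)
Your proposal is correct and follows essentially the same route as the paper: split $1=e_1+e_2$ using pure infinite simplicity with $e_1$ equivalent to $1$, use the hypothesis $[1]=0$ together with the Ara--Goodearl--Pardo result that nonzero idempotents in a purely infinite simple unital ring are classified by their $K_0$-class to conclude the complement is also equivalent to $1$, deduce module type $(1,2)$, and finish via the preceding lemma and Theorem~\ref{thm: Embed L(E) into L2}. The only cosmetic difference is that the paper cites \cite[Proposition 2.2]{AraGoodearlPardo} directly where you phrase the same input as the isomorphism $V(A)\setminus\{0\}\cong K_0(A)$.
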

\begin{proof}
We say that two idempotents $e,f$ are equivalent if there exists elements $x,y$ such that $e = xy$ and $f = yx$, we denote this by $e \approx f$. (For the relation between $\sim$ and $\approx$ see Remark \ref{rem: proj equiv}.)

$L_K(F)$ is purely infinite, so we can find orthogonal idempotents $e,f \in L_{K}(f)$ such that $e \approx 1$, $f \neq 0$ and $1 = e + f$.
Since the class of $1$ in $K_0(L_K(F))$ is $0$, the class of $f$ is also $0$, so by \cite[Proposition 2.2]{AraGoodearlPardo} $f \approx 1$. 
Therefore we can find elements $x_1, x_2, y_1, y_2 \in L_{K}(F)$ such that 
\[
  x_1 y_1 = e, \quad x_2 y_2 = f, \quad \text{ and } \quad y_1 x_1 = 1 = y_2 x_2.
\]
That is $L_{K}(F)$ has module type $(1,2)$.
The conclusion now follows from Corollary \ref{cor: module type embed}.
\end{proof}

There are many algebras that satisfy the conditions in Corollary \ref{cor: pi simple embed}, for examples see for instance \cite[Section 4]{AbramsAnhLoulyPardo}.
We can even get algebras with non-trivial $K$-theory.

We note that Corollary \ref{cor: pi simple embed} only lets us conclude the existence of a $K$-algebra embedding, not, as in Theorem \ref{thm: Embed L(E) into L2} a $*$-algebra embedding.
The issue is that in definition of module type and $K$-theory for rings, we deal with equivalence of idempotents rather than projections.
However, in certain specific cases, we can directly prove that a $*$-algebra embedding exists. 

\begin{example}
Let $R$ be a unital commutative ring and let $F$ be the graph 

\vspace{-0.3cm}
\begin{equation}
\begin{tikzpicture}[baseline=(current  bounding  box.center)]

\node[circle, draw=black, fill=black, inner sep=1pt] (u) at (0,0) {};
\node[circle, draw=black, fill=black, inner sep=1pt] (v) at (1,0) {};
    
\draw[-latex,thick] (u) .. controls +(45:1.5cm) and +(135:1.5cm) .. (u);
\draw[-latex,thick] (u) .. controls +(315:1.5cm) and +(225:1.5cm) .. (u);
\draw[-latex,thick] (u) to [bend left] (v);
\draw[-latex,thick] (v) to [bend left] (u);

\node at (0,1) {$e$};
\node at (0,-1) {$f$};
\node at (0.7,0.4) {$g$};
\node at (0.7,-0.4) {$h$};

\node at (-0.3,0) {$u$};
\node at (1.3,0) {$v$};

\end{tikzpicture}
\end{equation}
We claim that for any countable graph $E$ the Leavitt path algebra $L_{R}(E)$ embeds into $L_{R}(F)$ (as $*$-algebras), and if $E$ has finitely many vertices, then the embedding can be chosen unital. 
To see this, it suffices to find a unital $*$-algebra embedding of $L_{2,R}$ into $L_{R}(F)$.
Let 
\[
  s = e + g, \quad t = f + v.
\]
Then we have that 
\begin{align*}
 s^*s &= (e^* + g^*)(e + g) = e^* e + g^*g = u + v = 1, \\
 ss^* &= (e + g)(e^* + g^*) = ee^* + gg^*, \\
 t^*t &= (f^* + v)(f + v) = f^*f + v = u + v = 1, \\
 t^*t &= (f + v)(f^* + v) = ff^* + v.
\end{align*}
By the universal property of $L_{2,R}$ we get a unital $*$-homomorphism $\phi \colon L_{2,R} \to L_R(F)$ such that $\phi(a) = s$, $\phi(b) = t$. 
By the Cuntz-Krieger Uniqueness Theorem (\cite[Theorem 6.5]{TomfordeLeavittOverRing}) $\phi$ is an embedding. 

Note that when $R$ is a principal ideal domain it follows from \cite[Corollary 7.7]{AraBrustengaCortinas} that $K_0(L_R(F)) = \Z/2\Z$, and in particular it is non-zero.
\end{example}

\section{Concrete embeddings}

The proof of Theorem \ref{thm: Embed L(E) into L2} is constructive, in that it gives a recipe for how to construct embeddings of $L_{R}(E)$ into $L_{2,R}$. 
In the case where $E$ satisfies Condition (L), the recipe is as follows:
\begin{enumerate}
 \item Pick orthogonal projections $\{p_v \mid v \in E^0\}$ such that $p_{v} \sim 1$ (Proposition \ref{prop: projs in L2}). 
 \item For each $v$, that isn't a sink, pick orthogonal projections $\{ q_{v,e} \mid e \in s^{-1}(v) \}$ such that $q_{v,e} \sim p_v$ and such that $p_v = \sum_{s(e) = v} q_{v,e}$ (Proposition \ref{prop: projs in L2}). 
 \item Pick partial isometries $t_e$ such that $t_e^* t_e = p_{r(e)}$ and $t_e t_e^* = q_{s(e),e}$ (such partial isometries exists since $q_{s(e),e} \sim p_{r(e)}$).
 \item Then $\{p_v, t_e\}$ is a Cuntz-Krieger $E$-family in $L_{2,R}$ and the $*$-homomorphism they define from $L_{R}(E)$ to $L_{2,R}$ is injective. 
\end{enumerate}

In the proof we use Proposition \ref{prop: projs in L2} to get the desired projections in $L_{2,R}$, but we can often make easier choices in concrete cases.
To help us do that we introduce the notion of a cylinder set. We denote the set of finite paths in the graph underlying $L_{2,R}$ by $\{a,b\}^*$, and the set of infinite paths by $\{a,b\}^\N$. For $\alpha\in \{a,b\}^*$ 
and 
$\xi\in\{a,b\}^\N$, $\alpha \xi\in\{a,b\}^\N$ denotes the obvious concatenation. For each path $\alpha \in \{a,b\}^*$ we define the cylinder set of $\alpha$, denoted $Z(\alpha)$, as 
\[
  Z(\alpha) = \{ \alpha \mu \mid \mu \in \{a,b\}^\N \} \subseteq \{a,b\}^\N,
\]
It is a well known consequence of relation (5) in Definition \ref{def: Leavitt path algebra over field} that if $\alpha_1, \alpha_2, \ldots, \alpha_n$ is a collection of paths with $\sqcup_i Z(\alpha_i) = \{a,b\}^\N$ then 
\[
 \sum_{i=1}^n \alpha_i \alpha_i^* = 1.
\]
And similarly if $\sqcup_i Z(\alpha_i) = Z(\beta)$ for some path $\beta$ then 
\[
 \sum_{i=1}^n \alpha_i \alpha_i^* = \beta \beta^*.
\]
If two paths $\alpha,\beta$ have disjoint cylinder sets then $\alpha^* \beta = 0$.

We can now describe a concrete embedding of finite graphs that satisfy Condition (L).

\begin{proposition} \label{prop: concrete finite embedding}
Let $E$ be a finite graph that satisfies Condition (L). 
Suppose we are given paths 
\begin{itemize}
 \item $\{ \alpha_v \in \{a,b\}^* \mid v \in E^0 \}$, and 
 \item $\{ \beta_{e} \in \{a,b\}^* \mid e \in E^1 \}$,
\end{itemize}
such that 
\begin{itemize}
 \item $\bigsqcup_{v \in E^0} Z(\alpha_v) = \{a,b\}^\N$, and 
 \item $\bigsqcup_{e \in s^{-1}(v)} Z(\beta_e) = Z(\alpha_v)$, for each $v$ that is not a sink. 
\end{itemize}
Then 
\[
 p_v = \alpha_v \alpha_v^*, \quad \quad \text{ and } \quad \quad t_e = \beta_e \alpha_{r(e)}^*
\]
form a Cuntz-Krieger $E$ family in $L_{2,R}$. 
Furthermore the $*$-homomorphism $\phi \colon L_{R}(E) \to L_{2,R}$ given by 
\[
 \phi(v) = p_v, \quad \quad \text{ and } \quad \quad \phi(e) = t_e, 
\]
is unital and injective. 
\end{proposition}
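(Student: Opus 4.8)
The plan is to verify directly that $\{p_v, t_e, t_e^*\}$ satisfies the five defining relations of Definition~\ref{def: Leavitt path algebra over field}, then invoke the universal property of $L_R(E)$ together with Corollary~\ref{cor: Gen CKUT StarAlg}.

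First I would set up the cylinder-set dictionary. The text already records that disjoint cylinder sets give $\alpha^*\beta = 0$, and that a disjoint cylinder decomposition $\bigsqcup_i Z(\alpha_i) = Z(\beta)$ gives $\sum_i \alpha_i\alpha_i^* = \beta\beta^*$; I would add the observation that if $Z(\beta)\subseteq Z(\alpha)$ then $\beta$ factors through $\alpha$ as a path, $\beta = \alpha\gamma$, so that $\alpha^*\beta = \gamma$, and in particular $\alpha^*\alpha = 1$ since the graph underlying $L_{2,R}$ has a single vertex. With this in hand the relations are short computations. Relation (1) (the $p_v$ are pairwise orthogonal idempotents) follows from $\alpha_v^*\alpha_v = 1$, from $p_v^* = p_v$, and from $\alpha_v^*\alpha_w = 0$ for $v\neq w$ since the $Z(\alpha_v)$ are disjoint. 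Relations (2) and (3) follow from $Z(\beta_e)\subseteq Z(\alpha_{s(e)})$: writing $\beta_e = \alpha_{s(e)}\gamma_e$ gives $p_{s(e)}t_e = \alpha_{s(e)}\gamma_e\alpha_{r(e)}^* = t_e$ and $t_e p_{r(e)} = t_e$, with (3) obtained by applying the involution. For relation (4), $t_e^* t_f = \alpha_{r(e)}\beta_e^*\beta_f\alpha_{r(f)}^*$ equals $\alpha_{r(e)}\alpha_{r(e)}^* = p_{r(e)}$ when $e = f$ (using $\beta_e^*\beta_e = 1$), and vanishes when $e\neq f$ because $Z(\beta_e)$ and $Z(\beta_f)$ are disjoint --- this holds whether or not $s(e) = s(f)$, since in the latter case they sit inside the disjoint sets $Z(\alpha_{s(e)})$ and $Z(\alpha_{s(f)})$. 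Relation (5): for each non-sink $v$, which automatically emits finitely many edges as $E$ is finite, $t_e t_e^* = \beta_e\alpha_{r(e)}^*\alpha_{r(e)}\beta_e^* = \beta_e\beta_e^*$, so $\sum_{e\in s^{-1}(v)} t_e t_e^* = \sum_{e\in s^{-1}(v)}\beta_e\beta_e^* = \alpha_v\alpha_v^* = p_v$ by the hypothesis $\bigsqcup_{e\in s^{-1}(v)} Z(\beta_e) = Z(\alpha_v)$.

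Since $\{p_v, t_e, t_e^*\}$ is a Leavitt $E$-family, the universal property of $L_R(E)$ yields a $*$-homomorphism $\phi$ with $\phi(v) = p_v$ and $\phi(e) = t_e$. Unitality is immediate from $\bigsqcup_{v} Z(\alpha_v) = \{a,b\}^\N$, which gives $\phi(1) = \sum_v p_v = \sum_v \alpha_v\alpha_v^* = 1$. For injectivity I would apply Corollary~\ref{cor: Gen CKUT StarAlg}: condition (2) is vacuous since $E$ satisfies Condition (L) and hence has no cycles without an exit, while condition (1) asks that $rp_v\neq 0$ for all $v$ and all nonzero $r\in R$; but $\alpha_v$ witnesses $p_v \sim 1$ (we have $\alpha_v^*\alpha_v = 1$ and $\alpha_v\alpha_v^* = p_v$), so Lemma~\ref{lem: some projections are nice}(i) gives exactly this. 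Therefore $\phi$ is injective.

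I do not expect a genuine obstacle: the content is bookkeeping with the cylinder-set dictionary. The only points needing a sentence of care are the claim that cylinder-set containment $Z(\beta)\subseteq Z(\alpha)$ becomes the algebraic factorisation $\beta = \alpha\gamma$, and the case split in relation (4) covering both $s(e) = s(f)$ and $s(e)\neq s(f)$; both are routine once the dictionary is in place.
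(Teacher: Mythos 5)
Your proposal is correct and follows essentially the same route as the paper: both arguments reduce to the cylinder-set dictionary (disjointness gives orthogonality, $\alpha_v^*\alpha_v=1$ gives $p_v\sim 1$, and the decomposition hypotheses give the summation relations), and both obtain injectivity from the uniqueness theorem using Condition (L) and $rp_v\neq 0$. The only cosmetic difference is that the paper verifies the hypotheses of its four-step ``recipe'' (which delegates the Leavitt-family relations to the proof of Theorem~\ref{thm: Embed L(E) into L2}), whereas you check relations (1)--(5) of Definition~\ref{def: Leavitt path algebra over field} inline; the content is the same.
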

\begin{proof}
Since the cylinder sets $Z(\alpha_v)$ are disjoint the $p_v$ are pairwise orthogonal projections, and since $\alpha_v^* \alpha_v = 1$ we have that $p_v \sim 1$. 
For each $e \in E^1$, we let 
\[
 q_{s(e),e} = \beta_e \beta_e^*.
\]
Then $\beta_e$ witness the equivalence $q_{s(e),e} \sim 1$, so $q_{s(e),e} \sim p_{s(e)}$.
Since the cylinder sets $Z(\beta_e)$ are disjoint the $q_{v,e}$ are orthogonal projections and we have
\[
  \sum_{e \in s^{-1}(v)} q_{v,e} = p_v
\]
as
\[
  \bigsqcup_{e \in s^{-1}(v)} Z(\beta_e) = Z(\alpha_v).
\]
This shows that $p_v, q_{v,e}$ satisfies the first two points in the recipe. 

We now define 
\[
  t_e = \beta_e \alpha_{r(e)}^*.
\]
Then 
\[
  t_e^* t_e = \alpha_{r(e)} \beta_{e}^* \beta_{e} \alpha_{r(e)}^* = \alpha_{r(e)} \alpha_{r(e)}^* = p_{r(e)}, 
\]
and 
\[
  t_e t_e^* = \beta_{e} \alpha_{r(e)}^* \alpha_{r(e)} \beta_{e}^* = \beta_{e} \beta_{e}^* = q_{s(e),e}.
\]
Hence the $t_e$ satisfies the third point in the recipe. 
So by the fourth point $\{p_v, t_e\}$ is a Cuntz-Krieger $E$ family, and the $*$-homomorphism they define is injective.
\end{proof}

We use this to give concrete embeddings of some known Leavitt path algebras.

\begin{example}[Laurent polynomials]
We know from Proposition~\ref{prop: full spectrum} that $u = a a a^* + a b a^* b^* + b b^* b^*$ is a full spectrum unitary in $L_{2,R}$. As noted earlier, this means we have an embedding $L_R[z,z^{-1}]\hookrightarrow L_{2,R}$ mapping the polynomial $z$ to $u$.
\end{example}

\begin{example}[$L_{n,R}$]
Recall that $L_{n,R}$ is the Leavitt path algebra of the graph with one vertex and $n$ loops. We call the vertex $u$ and the loops $e_1, e_2, \ldots, e_n$. 
We wish to use Proposition \ref{prop: concrete finite embedding} to define an embedding. 
Since there is only one vertex we let $\alpha = \epsilon$ be the empty path. 
We now need to choose $n$ paths $\beta_1, \beta_2, \ldots, \beta_n$ such that $\sqcup_i Z(\beta_i) = \{a,b\}^\N$.
One way to do this is to put $\beta_i = a^{i-1}b$, for $i=1,2,\ldots,n-1$, and $\beta_n = a^{n-1}$. 
It now follows from Proposition~\ref{prop: concrete finite embedding} that the map $\phi \colon L_{n,R} \to L_{2,R}$ given on generators by 
\begin{align*}
      \phi(u) &= \alpha \alpha^* = 1, \\
      \phi(e_i) &= \beta_i = \begin{cases} a^{i-1}b, & i = 1,2,\ldots,n-1 \\ a^{n-1}, & i = n \end{cases}.
\end{align*}
is a unital $*$-homomorphic embedding. 
\end{example}

\begin{example}[The line graphs]
Let $A_n$ be the ``line graph'' with $n$ vertices. 
\vspace{-0.3cm}

\begin{equation}
\begin{tikzpicture}[baseline=(current  bounding  box.center)]

\node[circle, draw=black,fill=black, inner sep=1pt] (u1) at (0,0) {};
\node[circle, draw=black,fill=black, inner sep=1pt] (u2) at (1,0) {};
\node[circle, draw=black,fill=black, inner sep=1pt] (u3) at (2,0) {};
\node[circle, draw=black,fill=black, inner sep=1pt] (u4) at (3,0) {};
\node[circle, draw=black,fill=black, inner sep=1pt] (u5) at (4,0) {};
    
\draw[-latex,thick] (u1) -- (u2);
\draw[-latex,thick] (u2) -- (u3);    
\draw[thick, dotted] (u3) -- (u4);
\draw[-latex,thick] (u4) -- (u5);

\node at (0.2,0.3) {$u_1$};
\node at (1.2,0.3) {$u_2$};
\node at (2.2,0.3) {$u_3$};
\node at (3.2,0.3) {$u_{n-1}$};
\node at (4.2,0.3) {$u_n$};

\node at (0.5,-0.3) {$e_1$};
\node at (1.5,-0.3) {$e_2$};
\node at (3.5,-0.3) {$e_{n-1}$};

\end{tikzpicture}
\end{equation}

\vspace{-0.3cm}
Label the vertices and edges as above. 

We again wish to apply Proposition \ref{prop: concrete finite embedding}.
This time we first need to find $n$ paths $\alpha_1, \alpha_2, \ldots, \alpha_n$ such that $\sqcup_i Z(\alpha_i) = \{a,b\}^\N$.
Similar to the above example we define 
\[
  \alpha_i = \begin{cases} a^{i-1}b, & i = 1,2,\ldots,n-1 \\ a^{n-1}, & i = n. \end{cases}.
\]
For each $j = 1,2,\ldots, n-1$ we let $\beta_j = \alpha_i$. 
Then we are in a position to apply Proposition \ref{prop: concrete finite embedding}, which tells us that the map $\phi \colon L_R(A_n) \to L_{2,R}$ given on generators by
\begin{align*}
  \phi(u_i) &= \alpha_i \alpha_i^*,   \\
	\phi(e_j) &= \alpha_j \alpha_{j+1}^*,
\end{align*}
is an injective, unital $*$-homomorphism. 
\end{example}

\begin{example}[The Toeplitz algebra]
Let $\TT$ be the graph pictured below

\vspace{-0.3cm}

\begin{equation}
\begin{tikzpicture}[baseline=(current  bounding  box.center)]

\node[circle, draw=black,fill=black, inner sep=1pt] (u) at (0,0) {};
\node[circle, draw=black,fill=black, inner sep=1pt] (v) at (1,0) {};
    
\draw[-latex,thick] (u) .. controls +(120:1.5cm) and +(240:1.5cm) .. (u);
\draw[-latex,thick] (u) -- (v);
    
\node at (0.2,0.3) {$u$};
\node at (1.2,0.3) {$v$};
\node at (-0.8,0) {$e$};
\node at (0.5,-0.3) {$f$};

\end{tikzpicture}
\end{equation}

\vspace{-0.3cm}

We call the Leavitt path algebra $L_{R}(\TT)$ a Toeplitz algebra. 
To embed $L_{R}(\TT)$ into $L_{2,R}$ we define 
\[
	\alpha_1 = a, \quad \alpha_2 = b, \quad \beta_1 = aa, \quad \text{ and, } \quad \beta_2 = ab.
\]
Then $Z(\alpha_1) \sqcup Z(\alpha_2) = \{a,b\}^\N$ and $Z(\beta_1) \sqcup Z(\beta_2) = Z(\alpha_1)$, so by Proposition \ref{prop: concrete finite embedding} the map $\phi \colon L_{R}(\TT) \to L_{2,R}$ given on generators by 
\[
	\phi(u) = aa^*, \quad \phi(v) = bb^*,\quad \phi(e) = aaa^* \quad \text{ and } \quad \phi(f) = abb^*,
\] 
is a unital $*$-homomorphic embedding. 
\end{example}

\section*{Acknowledgements}

The authors thank Efren Ruiz for pointing out the reference \cite{AbramsRangaswamy} to them. 
The second-named author is grateful to Wojciech Szyma{\'n}ski for 
enlightening 
conversations about endomorphisms of $\OO_2$. 


The authors thank Gene Abrams, Pere Ara, and Enrique Pardo for helpful comments on an earlier version of this paper that helped improve it. 

During work on this project the first named author visited the University of 
Oslo. The first named author would like to thank the Institute for Mathematics 
\& Its Applications at the University of Wollongong for providing financial 
support, and the University of Oslo and Nadia Larsen for also providing 
financial 
support, and for their hospitality. 

Part of this work was done while the second named author was supported by an individual post doctoral grant from the Danish Council for Independent Research \textbar {} Natural Sciences. 

\bibliographystyle{plain}


\end{document}